\numberwithin{equation}{section}
\newtheorem{Def}{Definition}[section]
\newtheorem{thm}{Theorem}[section]
\newtheorem{lem}{Lemma}[section]
\newtheorem{rem}{Remark}[section]
\newtheorem{prop}{Proposition}[section]
\begin{document}
\title[Energy Estimate]{Energy estimates for a class of semilinear elliptic equations on half Euclidean balls} \subjclass{35J15, 35J60}
\keywords{semi-linear, Harnack inequality, blowup solutions, mean curvature, scalar curvature, Yamabe type equations, energy estimate}

\author{Ying Guo}
\address{Department of Mathematics\\
       University of Florida\\
        358 Little Hall P.O.Box 118105\\
        Gainesville FL 32611-8105}
\email{yguo@ufl.edu}

\author{Lei Zhang}
\address{Department of Mathematics\\
        University of Florida\\
        358 Little Hall P.O.Box 118105\\
        Gainesville FL 32611-8105}
\email{leizhang@ufl.edu}

\date{\today}

\begin{abstract}
For a class of semi-linear elliptic equations with critical Sobolev exponents and boundary conditions, we prove point-wise estimates for blowup solutions and energy estimates. A special case of this class of equations is a locally defined prescribing scalar curvature and mean curvature type equation.
\end{abstract}

\maketitle

\section{Introduction}

In this article we consider
\begin{equation}\label{equ23}
  \left\{
   \begin{aligned}
   -\Delta u&=g(u), ~~~~~~~B^{+}_{3},  \\
   \frac{\partial{u}}{\partial{x_n}}&=h(u), ~~~~~~~\partial B_{3}^+ \cap \partial \mathbb R^n_+,
   \end{aligned}
   \right.
\end{equation}
where $u>0$ is a positive continuous solution, $B_{3}^+$ is the upper half ball centered at the origin with radius $3$, $g$ is a continuous function on $(0,\infty)$ and $h$ is locally H\"older continuous on $(0,\infty)$.

If $g(s)=s^{\frac{n+2}{n-2}}$ and $h(s)=cs^{\frac{n}{n-2}}$, the equation (\ref{equ23}) is a typical prescribing curvature equation. If we use $\delta$ to represent the Euclidean metric, then $u^{\frac{4}{n-2}}\delta$ is conformal to $\delta$. Equation (\ref{equ23}) in this special case means the scalar curvature under the new metric is $4(n-1)/(n-2)$ and the boundary mean curvature under the new metric is $-\frac{2}{n-2}c$.  Equation (\ref{equ23}) is very closely related to the well known Yamabe problem and the boundary Yamabe problem.   For $g$ and $h$ we assume
$$GH_0:\quad g \mbox{ is a continuous function on } (0,\infty), \quad h \mbox{ is H\"older continuous on } (0,\infty). $$
and
$$GH_1:\quad \left\{\begin{array}{ll}
\lim_{s\to\infty} g(s)s^{-\frac{n+2}{n-2}} \mbox{ is non-increasing},\quad \lim_{s\to \infty} g(s)s^{-\frac{n+2}{n-2}}\in (0,\infty).\\
s^{-\frac n{n-2}}h(s) \mbox{ is non-decreasing and } \lim_{s\to \infty} s^{-\frac{n}{n-2}}h(s)<\infty.
\end{array}
\right.
$$
Let
\begin{equation}\label{ha}
c_h:=\lim_{s\to \infty} s^{-\frac{n}{n-2}} h(s).
\end{equation}
Then if $c_h>0$ we assume
$$GH_2:\quad \sup_{0< s\le 1}g(s)s^{-1}<\infty, \quad \mbox{ and }\quad \sup_{0< s\le 1} s^{-1}|h(s)|<\infty. $$
If $c_h\le 0$ our assumption on $g,h$ is
$$GH_3: \quad \sup_{0< s\le 1}g(s)<\infty, \quad \mbox{ and } \quad \sup_{0< s\le 1} |h(s)|<\infty. $$
The main result of this article is concerned with the case $c_h>0$:
\begin{thm}\label{mainthm1} Let $u>0$ be a solution of (\ref{equ23}) where $g$ and $h$ satisfy $GH_0$ and $GH_1$. Suppose $c_h>0$ and $GH_2$ also holds, then
\begin{equation}\label{equ24}
\int _{B^{+}_{1}}|\nabla u|^2+u^{\frac{2n}{n-2}}\leq C,
\end{equation}
for some $C>0$ that depends only on g, h and n.
\end{thm}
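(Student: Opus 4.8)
The plan is to argue by contradiction. Suppose the conclusion fails; then there is a sequence of solutions $u_i$ of (\ref{equ23}), with $g,h$ satisfying $GH_0$, $GH_1$, $GH_2$ and $c_h>0$, such that $\int_{B_1^+}(|\nabla u_i|^2+u_i^{2n/(n-2)})\to\infty$. The first observation is that $M_i:=\max_{\bar B_{5/2}^+}u_i\to\infty$: if $M_i$ stayed bounded along a subsequence, then $GH_2$ gives $0<g(u_i)\le C u_i\le C$ and $|h(u_i)|\le C u_i\le C$ on $\bar B_{5/2}^+$, so interior and boundary Schauder estimates yield a uniform $C^{1,\alpha}(\bar B_2^+)$ bound on $u_i$, hence a bound on the energy over $B_1^+$, a contradiction. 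Thus from now on $M_i\to\infty$. (If convenient, one may also reflect $u_i$ evenly across $\partial\mathbb R^n_+$ to convert the Neumann datum into a surface measure, but the direct half-space analysis below works as well.)

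Next I would run the standard selection process: using a weight of the form $(\operatorname{dist}(x,\partial B_{5/2}^+))^{(n-2)/2}u_i(x)$ one extracts points $x_i$ which are local maxima in a scale-invariant sense, with $u_i(x_i)\to\infty$, and, setting $\lambda_i=u_i(x_i)^{-2/(n-2)}\to0$, the rescaled functions
\[
v_i(y)=\lambda_i^{(n-2)/2}\,u_i(x_i+\lambda_i y)
\]
are defined on expanding balls (half-balls when $\operatorname{dist}(x_i,\partial\mathbb R^n_+)/\lambda_i$ stays bounded), satisfy $v_i(0)=1$, $v_i\le C$ on compact sets, and solve the rescaled equation. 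Writing $g(s)=(c_g+o(1))s^{(n+2)/(n-2)}$ and $s^{-n/(n-2)}h(s)\to c_h$ as $s\to\infty$ — which is exactly where $GH_1$ enters, with $GH_2$ controlling the contribution of small values — the rescaled nonlinearities converge and $v_i\to V$ in $C^2_{loc}$, where $V>0$ solves $-\Delta V=c_g V^{(n+2)/(n-2)}$ on $\mathbb R^n$, or the same equation on $\mathbb R^n_+$ with $\partial V/\partial x_n=c_h V^{n/(n-2)}$ on $\partial\mathbb R^n_+$. By the Caffarelli--Gidas--Spruck classification (interior case) and the corresponding half-space classification (boundary case, which uses $c_h>0$), $V$ is a standard bubble with finite energy $\int(|\nabla V|^2+V^{2n/(n-2)})=:\mathcal E_0<\infty$.

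The heart of the argument is to show via the Pohozaev identity that the blowup is \emph{isolated and simple}, so that the energy accumulating near each blowup point is comparable to $\mathcal E_0$ and the number of blowup points in $\bar B_1^+$ stays bounded by a constant $C(n,g,h)$; this contradicts the divergence of the total energy. Concretely, I would apply the Pohozaev identity to $u_i$ on half-balls $B_\sigma^+(x_i)$: the interior part gives the usual identity relating $\int_{\partial B_\sigma^+}\big(\tfrac{n-2}{2}u\,\partial_\nu u+\cdots\big)$ to $\int_{B_\sigma^+}\big(\tfrac{n}{2}G(u)-\tfrac{n-2}{2}u\,g(u)\big)$ with $G'=g$, and on the flat part $\partial B_\sigma^+\cap\partial\mathbb R^n_+$ an extra integral of the analogous expression in $H$ with $H'=h$. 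The monotonicity assumptions in $GH_1$ pin down the sign of $\tfrac{n}{2}G(s)-\tfrac{n-2}{2}s\,g(s)$ and of the corresponding $h$-expression, while $c_h>0$ makes the flat boundary term favorable; combining these rules out a second bubble at a smaller scale (non-simple blowup) and bounds the local energy, after which a covering argument in the fixed compact set $\bar B_1^+$ bounds the number of blowup points.

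I expect the Pohozaev step to be the main obstacle. The difficulties are: (i) correctly bookkeeping the boundary integrals over the spherical cap and over $\partial B_\sigma^+\cap\partial\mathbb R^n_+$, tracking which terms carry a definite sign under $GH_1$ and $c_h>0$; (ii) estimating the ``error'' quantities $\tfrac n2 G(s)-\tfrac{n-2}2 s g(s)$ and its $h$-analogue along the blowup sequence using only the monotonicity, since $g,h$ are merely asymptotically critical powers; and (iii) establishing the separation of blowup points at the scale needed for the counting argument, i.e.\ the isolated-simple dichotomy, which is the most delicate part and is precisely where the sign information from Pohozaev is used. Throughout, $GH_2$ is needed to keep $g(u_i),h(u_i)$ controlled where $u_i$ is small and to make the initial elliptic-estimate reduction valid.
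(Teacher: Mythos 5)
Your overall skeleton (contradiction, selection process, convergence of rescalings to bubbles classified by Caffarelli--Gidas--Spruck or the half-space analogue, then a Pohozaev-type sign argument under $GH_1$, $GH_2$, $c_h>0$, and finally a counting/covering step) is the same as the paper's, and the initial reduction ($\max u_i\to\infty$, the rescaled limits) is fine. But there is a genuine gap at what you yourself call the heart of the argument: you assert that the Pohozaev identity on half-balls $B_\sigma^+(x_i)$ ``rules out a second bubble at a smaller scale'' and yields the isolated-simple property, without giving any mechanism, and in fact Pohozaev alone does not deliver this. In the paper the fast-decay (isolated simple) property, Proposition \ref{prop3.2}, is proved \emph{before} and \emph{independently of} Pohozaev, by a method-of-moving-spheres comparison of the rescaled solution with its Kelvin transforms (Lemmas \ref{har1} and \ref{har2}), together with a boundary Harnack inequality; this is precisely the step the authors flag as requiring the classification theorems because the equation is not scale invariant, and your sketch contains no substitute for it. Without this $r^{2-n}$ decay one cannot extract the harmonic limit $\tilde M_k\tilde u_k\to a|y|^{2-n}+b(y)$, and without that limit the Pohozaev boundary integrals cannot be evaluated.

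Moreover, the way Pohozaev is actually used differs from what you describe: it is applied in the $\sigma_k$-rescaled variables, where $\sigma_k$ is the distance from a blowup point to its nearest neighbor in the blowup set (Proposition \ref{sing-apart}), not on fixed half-balls around a single bubble, and the contradiction does not come merely from the sign of $nG-\frac{n-2}{2}sg(s)$ and its $h$-analogue (which, by the monotonicity in $GH_1$, make the ``bulk'' and flat-boundary terms nonnegative). The decisive input is that the regular part satisfies $b(0)>0$, which encodes the influence of the \emph{neighboring} bubble and forces the spherical boundary term to be strictly negative of order $\sigma^{1-n}$; establishing $b(0)>0$ requires the pointwise lower bound $u_k(q_k)u_k(x)\ge C|x-q_k|^{2-n}$ (Lemma \ref{lem3.2}) and the comparability of the heights of neighboring bubbles (Lemma \ref{compa-able}), neither of which appears in your proposal. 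So while your plan points in the right direction and correctly identifies where $GH_1$, $GH_2$ and $c_h>0$ must enter, the two steps that actually carry the proof --- the moving-spheres proof of isolated-simple decay and the interaction term $b(0)>0$ feeding the Pohozaev contradiction --- are missing, and the argument as written does not close.
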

Obviously if
\begin{equation}\label{spegh}
 g(s)=c_1s^{\frac{n+2}{n-2}},\,\, c_1>0\quad \mbox{ and }
  h(s)=c_h s^{\frac{n}{n-2}},\,\, c_h>0,
\end{equation}
 $g$ and $h$  satisfy the assumptions in Theorem \ref{mainthm1}. The energy estimate (\ref{equ24}) for this special case has been proved by Li-Zhang \cite{lei}. It is easy to see that the assumptions of $g$ and $h$ in Theorem \ref{mainthm1} include a much large class of functions. For example, for any non-increasing function $c_1(s)$ satisfying $\lim_{s\to \infty}c_1(s)>0$ and $\lim_{s\to 0+}c_1(s)s^{\frac{4}{n-2}}<\infty$,
$g(s)=c_1(s)s^{\frac{n+2}{n-2}}$ satisfies the assumptions of $g$. Similarly $h(s)=c_2(s)s^{\frac{n}{n-2}}$ for a nondecreasing function $c_2(s)$ with $\lim_{s\to \infty} c_2(s)=c_h$ and $\lim_{s\to 0+}|c_2(s)|s^{\frac{2}{n-2}}<\infty$, satisfies the requirement of $h$ in Theorem \ref{mainthm1}.

For the case $c_h\le 0$ we have
\begin{thm}\label{minorthm}
Let $u>0$ be a solution of (\ref{equ23}) where $g$ and $h$ satisfy $GH_0$ and $GH_1$. Suppose $c_h\le 0$ and $g,h$ satisfy $GH_3$,
then the energy estimate (\ref{equ24}) holds for
C depending only on g, h and n.
\end{thm}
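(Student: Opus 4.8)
The plan is to reduce Theorem \ref{minorthm} to Theorem \ref{mainthm1} by a direct comparison/modification of the nonlinearities, exploiting the fact that when $c_h\le 0$ the boundary term only helps. First I would observe that the only place the hypothesis $c_h>0$ together with $GH_2$ is used in the proof of Theorem \ref{mainthm1} is to control $g$ and $h$ near $s=0$; the blow-up analysis at large values of $u$ is insensitive to the sign of $c_h$. So the strategy is: given $g,h$ satisfying $GH_0$, $GH_1$, $GH_3$ with $c_h\le 0$, replace $h$ by $\tilde h(s) := h(s) + C_0 s^{\frac{n}{n-2}}$ for a large constant $C_0>0$ chosen so that $\tilde c_h := c_h + C_0 > 0$. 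One checks immediately that $s^{-\frac{n}{n-2}}\tilde h(s) = s^{-\frac{n}{n-2}}h(s) + C_0$ is still non-decreasing with finite limit $\tilde c_h>0$, so $GH_1$ survives; and since $\sup_{0<s\le 1}|h(s)|<\infty$ by $GH_3$, we get $\sup_{0<s\le 1} s^{-1}|\tilde h(s)| \le \sup_{0<s\le 1} s^{-1}|h(s)| + C_0$, which may be infinite — so this naive substitution alone does not restore $GH_2$.

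The honest route, then, is not a black-box reduction but a re-examination of the proof of Theorem \ref{mainthm1}, isolating exactly where $GH_2$ enters and verifying that $GH_3$ suffices when $c_h\le 0$. Concretely I would: (i) run the same blow-up / Harnack argument; near a blow-up point $u$ is large, so on that scale the behavior of $g,h$ at small values is irrelevant and only $GH_1$ matters — this part is identical. (ii) Away from blow-up points $u$ is locally bounded, say $0<u\le M$ on the relevant region; here one needs $g(u)$ and $h(u)$ to be bounded to get interior and boundary elliptic estimates, and $GH_3$ gives exactly $\sup_{0<s\le M} g(s)<\infty$ and $\sup_{0<s\le M}|h(s)|<\infty$ after combining the near-zero bound from $GH_3$ with continuity/Hölder continuity of $g,h$ on the compact set $[\,\epsilon, M\,]$ for the part bounded away from $0$. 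The subtlety $GH_2$ was designed to handle is the case $c_h>0$, where $h(u)$ could be large positive on the boundary even when $u$ is small — in that regime one must prevent the boundary condition from forcing spurious blow-up, and the linear bound $s^{-1}|h(s)|$ is what lets a Harnack-type inequality absorb it. When $c_h\le 0$ the boundary flux $\partial u/\partial x_n = h(u)$ is non-positive for large $u$ and controlled for small $u$ by $GH_3$, so it can only push $u$ down, never up; hence no such delicate absorption is needed and the cruder bound $GH_3$ suffices.

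Thus the proof I would write is: repeat the proof of Theorem \ref{mainthm1} verbatim through the blow-up analysis, and at each of the finitely many points where $GH_2$ was invoked, replace it with the corresponding consequence of $GH_3$, using in addition the sign condition $c_h\le 0$ to replace the "boundary cannot cause blow-up" step (where $GH_2$'s linear control was essential) by the simpler observation that a non-positive-leading-order boundary flux is harmless — e.g. via a maximum-principle or Kelvin-transform comparison showing the boundary term contributes with a favorable sign to the relevant Pohozaev-type identity. The main obstacle is precisely this last step: one must check that the Harnack inequality near the boundary (which in the $c_h>0$ case relied on the linear bound for $h$) still holds under $GH_3$ and $c_h\le 0$, i.e. that the sign of $c_h$ can be leveraged to close the argument where the size of $h$ previously did. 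I expect this to follow because for $c_h\le 0$ one can bound $h(s)\le C(1+s)$ from above for all $s$ (the leading term being non-positive), and an upper bound on the boundary flux is all that is needed for the Harnack inequality and the subsequent energy estimate; the lower bound is automatic from positivity of $u$. Everything else — the covering argument, the Pohozaev identity on $B^+_r$, and the extraction of the uniform energy bound — carries over unchanged.
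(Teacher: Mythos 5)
Your plan has a genuine gap: it is a program to ``rerun the proof of Theorem \ref{mainthm1} with $GH_2$ replaced by $GH_3$,'' but the substitute arguments are never supplied at precisely the points where $GH_2$ is doing real work. In the blow-up analysis the linear control near $s=0$ from $GH_2$, namely $g(s)\le Cs$ and $|h(s)|\le Cs$ for $0<s\le 1$, is exactly what makes the zeroth-order coefficients $a_k=M_k^{-\frac{n+2}{n-2}}g(M_kv_k)/v_k$ and the boundary coefficients $b_k$ bounded on the sets where $M_kv_k\le 1$, so that the classical interior and boundary Harnack inequalities can be applied (this is how (\ref{localbd}) is proved in Proposition \ref{prop3.1}, and the same device recurs in (\ref{fk1}), in the spherical Harnack step of Proposition \ref{prop3.2}, and in Lemma \ref{lem3.2}). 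Under $GH_3$ one only knows $g(M_kv_k)$ and $|h(M_kv_k)|$ are bounded, so these coefficients behave like $C/v_k$ where the rescaled solution is tiny and need not be bounded; your assertion that ``an upper bound on the boundary flux is all that is needed for the Harnack inequality'' is exactly the statement that would have to be proved, and the Harnack inequalities with boundary terms invoked in the paper (Han--Li type) require bounded coefficients, not one-sided bounds. Worse, the coefficient problem coming from $g$ is an interior issue, so the sign of $c_h$ cannot help there at all; your step (ii), which replaces this by ``$g(u),h(u)$ bounded away from blow-up points,'' misses that the boundedness is needed for the rescaled coefficient $g(M_kv_k)/(M_k^{\frac{n+2}{n-2}}v_k)$, not for $g(u)$ itself. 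Carrying out your plan would amount to reproving Li--Zhang's Harnack inequality under $GH_3$, which is not done in the proposal.

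The paper's proof of Theorem \ref{minorthm} is entirely different and avoids blow-up analysis altogether. Theorem A (Li--Zhang) already gives $(\max_{\overline{B_1^+}}u)(\min_{\overline{B_2^+}}u)\le C$ under $GH_0$, $GH_1$, $GH_3$ with no sign restriction on $c_h$, and $GH_1$ with $c_h\le 0$ forces $h(s)\le c_h s^{\frac{n}{n-2}}\le 0$ for all $s$, since $s^{-\frac{n}{n-2}}h(s)$ is non-decreasing with limit $c_h$. One then takes the Green's function of $B_3^+$ with Neumann condition on the flat boundary (even reflection of the Green's function of $B_3$), writes the representation formula, and uses the signs $h(u)\le 0$ and $\partial_\nu G\le 0$ to drop the boundary terms, obtaining $u(x)\ge\int_{B_3^+}g(u)G(x,y)\,dy$; evaluating at a minimum point on $\partial B_2^+$ and invoking Theorem A bounds $\int_{B_{3/2}^+}g(u)u$, hence $\int u^{\frac{2n}{n-2}}$ because $g(s)s^{-\frac{n+2}{n-2}}$ is bounded below by a positive constant, and the gradient bound then follows from multiplying the equation by $u\eta^2$ and integrating by parts. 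If you want a correct write-up along the lines you intend, the efficient fix is to quote Theorem A as the paper does rather than attempt to rebuild the $c_h>0$ machinery under the weaker hypothesis $GH_3$.
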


If we allow $\lim_{s\to \infty} s^{-\frac{n+2}{n-2}}g(s)=0$, then the energy estimate (\ref{equ24}) may not hold. For example, let $g(s)=\frac 14(s+1)^{-3}$, then $g$ satisfies the assumption in Theorem \ref{minorthm} except that $\lim_{s\to \infty}s^{-\frac{n+2}{n-2}}g(s)=0$. Let
$u_j(x)=\sqrt{x_1+j}-1$, it is easy to verify that $u_j$ satisfies
$$\left\{\begin{array}{ll}
-\Delta u_j=g(u_j) \quad \mbox{ in }\quad B_3^+,\\
\displaystyle{\frac{\partial u_j}{\partial x_n}}=0, \quad \mbox{ on }\quad \partial B_3^+\cap \partial \mathbb R^n_+.
\end{array}
\right.
$$
Note that $h=0$ in this case. Then clearly (\ref{equ24}) does not hold for $u_j$.

The energy estimate (\ref{equ24}) is closely related to the following Harnack type inequality:
\begin{equation}\label{minmax}
(\min_{B_1^+}u )(\max_{B_2^+}u )\le C,
\end{equation}
which was proved by Li-Zhang \cite{lei} for the special case (\ref{spegh}). Li-Zhang \cite{lei} also proved the (\ref{equ24}) for (\ref{spegh}) using (\ref{minmax}) in their argument in a nontrivial way.

In the past two decades Harnack type inequalities similar to (\ref{minmax}) have played an important role in
blowup analysis for semilinear elliptic equations with critical Sobolev exponents. Pioneer works in this respect can be found in Schoen \cite{schoen1}, Schoen-Zhang \cite{schoen-zhang}, Chen-Lin
\cite{chen-lin-cpam1}, and further extensive results can be found in \cite{chen-lin-2,gui-lin,aobing,lei,li-zhang-4d,lin-pra,schoen-zhang,zhang2} and the references therein. Usually for a semi-linear equation without boundary condition, for example the conformal scalar curvature equation
$$\Delta u + K(x) u^{\frac{n+2}{n-2}}=0, \quad B_3, $$
a Harnack inequality of the type
$$\big (\max_{B_1} u \big )\big ( \min_{B_2} u \big ) \le C $$
immediately leads to the energy estimate
$$\int_{B_1} |\nabla u|^2+u^{\frac{2n}{n-2}}\le C $$
by the Green's representation theorem and integration by parts. However, when the boundary condition as in (\ref{equ23}) appears, using the Harnack inequality (\ref{minmax}) to derive (\ref{equ24}) is much more involved. In order to derive energy estimate (\ref{equ24}) and pointwise estimates for blow up solutions, Li and Zhang prove the following results in \cite{lei}:

\bigskip
\emph {Theorem A (Li-Zhang). Let $u>0$ be a solution of (\ref{equ23}) where $g$ and $h$ satisfy $GH_0$, $GH_1$ and $GH_3$. Then
$$(\max_{\overline{B_1^+}}u)(\min_{\overline{B_2^+}}u)\le C. $$ }

\medskip

Here we note that in Theorem $A$ no sign of $c_h$ is specified.
One would expect the energy estimate (\ref{equ24}) to follow directly from Li-Zhang's theorem. This is indeed the case if $c_h\le 0$. However for $c_h>0$ substantially more estimates are needed in order to establish a precise point-wise estimate for blowup solutions. As a matter of fact we need to assume $(GH_2)$ instead of $(GH_3)$ in order to obtain (\ref{equ24}).

The organization of this article is as follows. In section two we prove Theorem \ref{mainthm1}. The idea of the proof is as follows. First we use a selection process to locate regions in which the bubbling solutions look like global solutions. Then we consider the interaction of the bubbling regions. Using delicate blowup analysis and Pohozaev identity we prove that bubbling regions must be a positive distance apart. Then the energy estimate (\ref{equ24}) follows. Even thought the main idea we use to prove Theorem \ref{mainthm1} is similar to Li-Zhang's proof of the special case $g(s)=A s^{\frac{n+2}{n-2}}, h(s)=c_h s^{\frac{n}{n-2}}$, there are a lot of technical difficulties for the more general case. For example
Li-Zhang's proof relies heavily on the fact that the equation is invariant under scaling, thus they don't need any classification theorem in their
moving sphere argument. However in the more general case the equation is not scaling invariant any more and we have to use the classification theorem of Caffarelli-Gidas-Spruck or Li-Zhu.  In section three we prove Theorem \ref{minorthm} using Theorem A and integration by parts.

\section{Proof of Theorem \ref{mainthm1}}

The proof of Theorem \ref{mainthm1} is by way of contradiction. Suppose there is no energy bound, then there exists a sequence $u_k$ such that \begin{equation}\label{aug14e1}
\int_{B_1}|\nabla u_k|^2+u_k^{\frac{2n}{n-2}}\to \infty.
\end{equation}

We claim that $\max_{B_{3/2}^+}u_k\to \infty$. Indeed, if this is not the case, which means there is a uniform bound for $u_k$ on
$B_{3/2}^+$, we just take a cut-off function $\eta\in C^{\infty}$ such that $\eta\equiv 1$ on $B_1^+$ and $\eta\equiv 0$ on $B_{3/2}^+\setminus B_1^+$ and $|\nabla \eta|\le C$. Multiplying the equation (\ref{equ23}) by $u_k\eta^2$, using integration by parts and simple Cauchy's inequality we obtain a uniform bound of $\int_{B_1}|\nabla u_k|^2$, a contradiction to (\ref{aug14e1}).

\begin{Def}
Let $\{u_k\}$ be a sequence of solutions of (\ref{equ23}). Assume that $x_k\rightarrow \bar{x} \in \overline{B_2^+}$ and $\lim_{k\rightarrow \infty}u_k(x_k)= \infty$. If there exist $C>0$ (independent of k) and $\bar{r}>0$ (independent of k) such that $u_k(x){|x-\bar{x}|}^{\frac{n-2}{2}}\leq C$ for $|x-\bar{x}|\leq \bar{r}$, then we say that $\bar{x}$ is an isolated blow\text{-}up point of $\{u_k\}$.
\end{Def}

\begin{prop} \label{prop3.1}
Let $\{u_k\}$ be a sequence of solutions of (\ref{equ23}) and
$$
\max_{x \in \overline{B_1^+}}u_k(x){|x|}^{\frac{n-2}{2}}\rightarrow \infty ~~ as ~~ k\rightarrow \infty,
$$
then there exist a sequence of local maximum points $\bar x_k$ such that along a subsequence
(still denoted as $\{u_k\}$)
$$
v_k(y):={u_k(\bar x_k)}^{-1}u_k({u_k(\bar x_k)}^{-\frac{2}{n-2}}y+\bar x_k)
$$
either
converges uniformly over all compact subsets of $\mathbb R^n$ to $V$ that satisfies
\begin{equation}\label{global1}
\Delta V+ A V^{\frac{n+2}{n-2}}=0, \quad \mathbb R^n
\end{equation}
(where $A=\lim_{s\to \infty} g(s)s^{-\frac{n+2}{n-2}}$),
or converges to $V_1$ defined on $\{y\in \mathbb R^n;\quad y_n>-T\}$
( $T:=\lim_{k\to \infty} u_k(\bar x_k)^{\frac{2}{n-2}}\bar x_{kn}$ )
that satisfies
\begin{equation}\label{global2}
\left\{\begin{array}{ll}
\Delta V_1+ A V_1^{\frac{n+2}{n-2}}=0,\quad \mathbb R^n\cap \{y_n>-T\}, \\
\partial V_1/\partial y_n=c_h V_1^{\frac{n}{n-2}},\quad y_n=-T.
\end{array}
\right.
\end{equation}
where $c_h=\lim_{s\to \infty} s^{-\frac{n}{n-2}}h(s)$.
\end{prop}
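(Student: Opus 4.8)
The plan is a point-selection (bubbling) argument, adapting the scaling-invariant case to the present non-autonomous $g,h$ by means of $GH_1$ and $GH_2$. First I would replace the weighted hypothesis by a genuine blow-up of $u_k$: since $|x|\le 1$ on $\overline{B_1^+}$, the assumption $\max_{\overline{B_1^+}}u_k(x)|x|^{\frac{n-2}{2}}\to\infty$ forces $\max_{\overline{B_1^+}}u_k\to\infty$, hence $\max_{\overline{B_2^+}}(2-|x|)^{\frac{n-2}{2}}u_k(x)\to\infty$. Let $\bar x_k\in\overline{B_2^+}$ realize this weighted maximum and set $\bar M_k:=u_k(\bar x_k)$, $2\sigma_k:=2-|\bar x_k|$, $r_k:=\bar M_k^{-\frac{2}{n-2}}$. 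The usual weighted-maximum computation then gives $\sigma_k^{\frac{n-2}{2}}\bar M_k\to\infty$ (so $r_k=o(\sigma_k)$) and $u_k\le 2^{\frac{n-2}{2}}\bar M_k$ on $B_{\sigma_k}(\bar x_k)\cap\overline{B_3^+}$; passing to a subsequence, $\bar x_k\to\bar x\in\overline{B_2^+}$.

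Next I would rescale around $\bar x_k$: set $v_k(y):=\bar M_k^{-1}u_k(r_ky+\bar x_k)$ on $\Omega_k:=\{|y|<\sigma_k/r_k\}\cap\{y_n>-t_k\}$ with $t_k:=\bar M_k^{\frac{2}{n-2}}\bar x_{kn}\ge 0$, and pass to a further subsequence so that $t_k\to T\in[0,\infty]$; then $\Omega_k$ exhausts $\mathbb R^n$ if $T=\infty$ and $\{y_n>-T\}$ if $T<\infty$. One has $0<v_k\le 2^{\frac{n-2}{2}}$, $v_k(0)=1$, and $v_k$ solves $-\Delta v_k=G_k(v_k)$ in $\Omega_k$, $\partial v_k/\partial y_n=H_k(v_k)$ on $\{y_n=-t_k\}$, where $G_k(v):=\bar M_k^{-\frac{n+2}{n-2}}g(\bar M_kv)$ and $H_k(v):=\bar M_k^{-\frac{n}{n-2}}h(\bar M_kv)$. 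The key point is that $G_k(v_k)$ and $H_k(v_k)$ are uniformly bounded on compact sets and converge to the critical powers: where $\bar M_kv_k\ge 1$ one uses that, by $GH_1$, $g(s)s^{-\frac{n+2}{n-2}}$ and $h(s)s^{-\frac{n}{n-2}}$ are monotone with finite limits $A$ and $c_h$, hence bounded on $[1,\infty)$ and convergent; where $\bar M_kv_k\le 1$ (i.e. $v_k\le\bar M_k^{-1}$) assumption $GH_2$ (or $GH_3$) makes both of size $O(\bar M_k^{-\frac{n+2}{n-2}})\to 0$. Interior and Neumann-type boundary elliptic estimates then bound $v_k$ in $C^{1,\alpha}_{loc}(\overline{\Omega_k})$, so along a subsequence $v_k\to V$ in $C^1_{loc}$; passing to the limit, $V\ge 0$ solves $\Delta V+AV^{\frac{n+2}{n-2}}=0$ in $\mathbb R^n$ when $T=\infty$, and in $\{y_n>-T\}$ with $\partial V/\partial y_n=c_hV^{\frac{n}{n-2}}$ on $\{y_n=-T\}$ when $T<\infty$, with $V(0)=1$. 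The strong maximum principle, together with Hopf's lemma on $\{y_n=-T\}$ (which contradicts the boundary identity wherever $V$ would vanish), upgrades $V>0$ on the closed domain; renaming $V_1:=V$ in the second case gives (\ref{global1}), resp. (\ref{global2}).

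Finally I would arrange that $\bar x_k$ is a local maximum of $u_k$. This comes either from a standard refinement of the selection (replacing $\bar x_k$ by a maximum of $u_k$ over a suitable small ball about it) or, more conveniently, by invoking the classification theorem of Caffarelli--Gidas--Spruck (for $T=\infty$) or Li--Zhu (for $T<\infty$): the limit $V$ is then a standard bubble with a unique nondegenerate maximum at some $y_0$, so $v_k$ has a local maximum $y_k\to y_0$, and $\bar x_k':=r_ky_k+\bar x_k$ is a local maximum of $u_k$ with $\bar x_k'\to\bar x$; recentring the rescaling at $\bar x_k'$ (admissible since $u_k(\bar x_k')/\bar M_k=v_k(y_k)\to V(y_0)>0$ and the bubble family is closed under the corresponding rescaling) yields a rescaled sequence converging to a bubble normalised to $1$ with maximum at the origin, with $T$ replaced by $\lim_k u_k(\bar x_k')^{\frac{2}{n-2}}(\bar x_k')_n$, which is finite exactly when the original $T$ is. I expect the main obstacle to be the limit-passing step: unlike the scaling-invariant case, the rescaled functions do not solve the same equation, so one must control the non-autonomous coefficients $G_k,H_k$ uniformly on compacta — which requires the asymptotics of $g,h$ at infinity ($GH_1$) and, to handle the sets where $v_k$ is small, near the origin ($GH_2$/$GH_3$) — and one must ensure the estimates and the convergence hold uniformly up to the moving boundary $\{y_n=-t_k\}$ so that the limiting boundary condition can be read off.
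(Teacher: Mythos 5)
Your proposal is correct and follows essentially the same route as the paper: a weighted point selection, rescaling by $u_k(\bar x_k)^{-\frac{2}{n-2}}$, the dichotomy on $\lim_k u_k(\bar x_k)^{\frac{2}{n-2}}\bar x_{kn}$, control of the rescaled nonlinearities using $GH_1$ where $\bar M_k v_k\ge 1$ and $GH_2$ where $\bar M_k v_k\le 1$, and the Caffarelli--Gidas--Spruck / Li--Zhu classifications to produce the local maximum points and recentre. The only (harmless) variations are your weight $(2-|x|)^{\frac{n-2}{2}}u_k$ on $\overline{B_2^+}$ in place of the paper's $u_k(y)(d_k-|y-x_k|)^{\frac{n-2}{2}}$ on $B(x_k,d_k)$, and your derivation of $V>0$ from the strong maximum principle and Hopf's lemma at the limit, where the paper instead proves a quantitative positive lower bound for $v_k$ on compact sets beforehand, via interior and boundary Harnack inequalities with the bounded coefficients $a_k,b_k$ (which also makes $\bar M_k v_k\to\infty$ locally, cleaning up the identification of the limiting coefficient $A$).
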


\begin{rem}
All solutions of (\ref{global1}) are described by the classification theorem of Caffarelli-Gidas-Spruck \cite{caffarelli}. All solutions of
(\ref{global2}) are described by Li-Zhu \cite{liandzhu}.
\end{rem}

\noindent{\bf Proof of Proposition \ref{prop3.1}:}

Let $x_k\in B_1^+$ be a sequence such that
$$
u_k(x_k){|x_k|}^{\frac{n-2}{2}}\rightarrow \infty ~~ as ~~ k\rightarrow \infty.
$$
Let $d_k=|x_k|$ and $S_k(y)=u_k(y)(d_k-|y-x_k|)^{\frac{n-2}{2}}, \forall y \in B_1^+$.
Set
$$S_k(\widehat x_k)=\max_{\overline{{B(x_k, d_k) \cap \{t> -x_{kn}\}}}}S_k. $$ Then
\begin{equation}\label{13aug9e1}
S_k(\widehat x_k)\ge S_k(0)=u_k(x_k)|x_k|^{\frac{n-2}2}\rightarrow \infty.
\end{equation}

Let $\sigma_k=\frac{1}{2}(d_k-|x_k-\widehat x_k|)$, then clearly (\ref{13aug9e1}) can be written as
\begin{equation}\label{equ6}
u_k(\widehat x_k)2^{\frac{n-2}{2}}\sigma_k^{\frac{n-2}{2}}\geq u_k(x_k){d_k}^{\frac{n-2}{2}} \rightarrow \infty ~~ \mbox{as} ~~ k\rightarrow \infty.
\end{equation}

For all $x \in B_{\sigma_k}^+(\widehat x_k)$, since
$$
u_k(x) {(d_k-|x-x_k|)}^{\frac{n-2}{2}} \leq  u_k(\widehat x_k) {(d_k-|x_k-\widehat x_k|)}^{\frac{n-2}{2}},
$$
we have $$
u_k(x) \leq u_k(\widehat x_k) \left(\frac{d_k-|x_k-\widehat x_k|}{d_k-|x-x_k|} \right)^{\frac{n-2}{2}}.
$$
Using $|x-\hat x_k|\le \sigma_k$, and
$$d_k-|x-x_k|\ge d_k-|x_k-\hat x_k|-|x-\hat x_k|\ge \sigma_k, $$
we obtain
\begin{equation}\label{equ7}
u_k(x) \leq 2^{\frac{n-2}{2}}u_k(\widehat x_k), \quad for ~ all ~ x \in B_{\sigma_k}^+(\widehat x_k).
\end{equation}

Let $M_k=u_k(\widehat x_k)$ and
$$
v_k(y)={M_k}^{-1}u_k({M_k}^{-\frac{2}{n-2}}y+\widehat x_k), \quad M_k^{-\frac 2{n-2}}y+\hat x_k\in B_3^+.
$$
Direct computation shows
\begin{equation}\label{13aug9e3}
\Delta v_k(y)+{(M_kv_k(y))}^{-\frac{n+2}{n-2}}g(M_kv_k(y))\cdot {v_k(y)}^{\frac{n+2}{n-2}}=0
\end{equation}

By (\ref{equ7}) we have
\begin{equation}\label{13aug9e2}
0 \leq v_k(y) \leq 2^{\frac{n-2}{2}}\quad \forall y\in B(0,{M_k}^{\frac{2}{n-2}}\sigma_k)  \cap  \{y_n \geq -{M_k}^{\frac{2}{n-2}}{\widehat x _{kn}} \}
\end{equation}

We consider the following two cases.

\noindent{\bf Case one:
$\lim_{k \rightarrow \infty} {M_k}^{\frac{2}{n-2}}{\widehat x _{kn}} =\infty. $}

Since $M_k^{\frac 2{n-2}}\sigma_k$ and $M_k^{\frac 2{n-2}}\hat x_{nk}$ both tend to infinity,  (\ref{13aug9e3})
is defined on $|y|\le l_k$ for some $l_k\to \infty$. By (\ref{13aug9e2}) we assume that $v_k$ is bounded above in $B_{l_k}$. We claim that $v_k\to V$ uniformly over all compact subsets of $\mathbb R^n$ and $V$ satisfies (\ref{global1}) with $A=\lim_{s\to \infty} s^{-\frac{n+2}{n-2}}g(s)$.
Indeed, we claim that for any $R>1$,
\begin{equation}\label{localbd}
v_k(y)\ge C(R)>0,\quad |y|\le R.
\end{equation}
Once (\ref{localbd}) is established, we see clearly that $M_kV_k\to \infty$ over all $B_R$, thus
$$M_k^{-\frac{n+2}{n-2}}g(M_k v_k)=(M_k v_k)^{-\frac{n+2}{n-2}}g(M_k v_k)v_k^{\frac{n+2}{n-2}}\to AV^{\frac{n+2}{n-2}}$$
over all compact subsets of $\mathbb R^n$. Then it is easy to see that $V$ solves (\ref{global1}).

Therefore we only need to establish (\ref{localbd}) for fixed $R>1$.
Let
$$\Omega_{R,k}:=\{y\in B_R;\quad v_k(y)\le 3M_k^{-1}\quad \} $$
and
$$a_k(y)=M_k^{-\frac{n+2}{n-2}}g(M_k v_k)/v_k. $$
It follows from $(GH_1)$ that in $B_R\setminus \Omega_{R,k}$
$$a_k(y)\le g(3)v_k^{\frac{4}{n-2}}\le 4g(3). $$
For $y\in \Omega_{R,k}$ we use $(GH_2)$ to obtain
$$a_k(y)\le C M_k^{-\frac 4{n-2}},\quad y\in \Omega_{R,k}. $$
In either case $a_k(y)$ is a bounded function . From
$$\Delta v_k(y)+a_k(y)v_k(y)=0 \quad \mbox{ in } B_R $$
and standard Harnack inequality we have
$$1=v_k(0)\le \max_{B_{R/2}} v_k\le C(R) \min_{B_{R/2}}v_k. $$
Thus (\ref{localbd}) is established.
Consequently $V$, as the limit of $v_k$ indeed solves
(\ref{global1}).
By the classification theorem of Caffarelli-Gidas-Spruck \cite{caffarelli}
$$
V(y)=(\frac{n(n-2)}A)^{\frac{n-2}4}{\left (\frac{\mu}{1+{\mu}^2{|y|}^2} \right )}^{\frac{n-2}{2}}
$$
Obviously $V$ has a maximum point $\bar x \in \mathbb R^n$. Correspondingly there exists a sequence of local maximum points of $u_k$, denoted $\bar x_k$, that tends to $\bar x$ after scaling. Thus $v_k$ can be defined as in the statement of Proposition \ref{prop3.1}.

\medskip

{\bf Case two: $\lim_{k\to \infty} M_k^{\frac 2{n-2}} \hat x_{kn}<\infty$. }

In this case we let
$$ T=\lim_{k \rightarrow \infty} M_k^{\frac{2}{n-2}} \widehat x_{kn}. $$
It is easy to verify that $v_k$ satisfies
$$\left\{\begin{array}{ll}
\Delta v_k(y)+(M_kv_k(y))^{-\frac{n+2}{n-2}}g(M_k v_k(y)) v_k^{\frac{n+2}{n-2}}(y)=0, \quad \mbox{ in }\quad
M_k^{-\frac 2{n-2}}y+\hat x_k\in B_3^+, \\
\frac{\partial v_k}{\partial y_n}=(M_k v_k(y))^{-\frac{n}{n-2}} h(M_k v_k(y)) v_k(y)^{\frac 2{n-2}} v_k(y),
\quad \mbox{ on } y_n=-M_k^{\frac{2}{n-2}}\hat x_{kn}.
\end{array}
\right.
$$
We claim that for any $R>1$, there exists $C(R)>0$ such that
\begin{equation}\label{13aug15e1}
v_k(y)\ge C(R)\mbox{ in } B_R\cap\{y_n\ge -M_k^{\frac 2{n-2}} \hat x_{kn}\}.
\end{equation}

The proof of (\ref{13aug15e1}) is similar to the interior case. Let
$T_k=M_k^{\frac 2{n-2}}x_{kn}$ and $p_k=(0',-T_k)$. On $B(p_k, R)\cap \{y_n\ge -T_k\}$ we write the equation for $v_k$ as
\begin{equation}\label{fk1}
\left\{\begin{array}{ll}
\Delta v_k+a_k v_k=0 ,\quad \mbox{ in } B(p_k,R)\cap\{y_n> -T_k\}, \\
\\
\partial_n v_k+b_k v_k=0,\quad \mbox{ on } B(p_k,R)\cap \{y_n=-T_k\}.
\end{array}
\right.
\end{equation}
where it is easy to use $GH_2$ to prove that $|a_k|+|b_k|\le C$ for some $C$ independent of $k$ and $R$.
By a classical Harnack inequality with boundary terms (for example, see Lemma 6.2 of \cite{zhang1} or Han-Li \cite{hanli}), we have
$$1=v_k(0)\le \max_{B(p_k, R/2)\cap \{y_n\ge -T_k\}}v_k\le C(R)\min_{B(p_k,R/2)\cap \{y_n\ge -T_k\}} v_k. $$
Therefore $v_k$ is bounded below by positive constants over all compact subsets.
 Thus the limit function $V_1$ solves (\ref{global2}).  By
Li-Zhu's classification theorem \cite{liandzhu},
$$
V_1(y)=(\frac{n(n-2)}A)^{\frac{n-2}4}{\left (\frac{\lambda}{1+\lambda^2({|y'-\bar y|}^2+{|y_n-\bar y_n|}^2)} \right )}^{\frac{n-2}{2}},
$$
where $\bar y_n=c_h\sqrt{(n-2)n/A}/((n-2)\lambda)$, $\bar y\in \mathbb R^{n-1}$, $\lambda$ is determined by $V_1(0)=1$. Thus the local maximum of $V_1$ can be used to defined $v_k$ as in the statement of the proposition.
Proposition \ref{prop3.1} is established. $\Box $

\medskip

Proposition \ref{prop3.1} determines the first point in the blowup set $\Sigma_k$. The other points in $\Sigma_k$ can be determined as follows: Consider the maximum of
$$S_k(x)=u_k(x)^{\frac{2}{n-2}}dist(x,\Sigma_k). $$
If $S_k(x)$ is uniformly bounded we stop. Otherwise the same selection process we get another blowup profile by either the classification theorem of Caffarelli-Gidas-Spruck or Li-Zhu.  Eventually we have $\{q_k^i\}\in \Sigma_k$ ($i=1,2,..,$) that satisfy
$$
  \left\{
   \begin{aligned}
   &\overline{B_{r_i^k}^+(q_i^k)} \cap \overline{B_{r_j^k}^+(q_j^k)}=\emptyset, & for ~ i \neq j,  \\
   &{|q_i^k-q_j^k|}^{\frac{n-2}{2}}u_k(q_j^k)\to \infty, & for ~ j > i,\\
   &u_k(x)^{\frac{2}{n-2}} dist (x,\Sigma_k)\le C.
   \end{aligned}
   \right.
$$
and $r_i^k$ are chosen so that in $B_{r_i^k}^+(q_i^k)$, the profile of $u_k$ is either like an entire standard bubble described in (\ref{global1})
or a part of the bubble described in (\ref{global2}).

\medskip

Take any $q_k\in \Sigma_k$, let $\sigma_k=dist(q_k,\Sigma_k\setminus \{q_k\})$ and we let
$$\tilde u_k(y)=\sigma_k^{\frac{n-2}2}u_k(q_k+\sigma_k y),\quad \mbox{ in } \Omega_k $$
where $\Omega_k:=\{y;\quad q_k+\sigma_k y\in B_3^+ \}$. By the selection process we have
\begin{equation}\label{aug15e10}
\tilde u_k(y)\le C|y|^{-\frac{n-2}2}, \quad |y|\le 3/4, y\in \Omega_k
\end{equation}
 and
 \begin{equation}\label{aug15e11}
\tilde u_k(0)\to \infty.
\end{equation}

We further prove in the following proposition that $\tilde u_k$ decays like a harmonic function:

\begin{prop}\label{prop3.2}
\begin{equation}\label{isosimple1}
\tilde u_k(0)\tilde u_k(y){|y|}^{n-2} \leq C, \quad \mbox{ for } y\in B_{2/3}\cap \Omega_k.
\end{equation}
\end{prop}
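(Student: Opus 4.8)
The plan is to show that $\tilde u_k$ satisfies a Harnack inequality on dyadic annuli around the origin, and then combine this with the fact that $\tilde u_k$ is the scaled version of a solution that near $q_k$ looks like a single bubble, so that the ``isolated simple blowup'' structure forces the harmonic-type decay $\tilde u_k(0)\tilde u_k(y)|y|^{n-2}\le C$. The key quantitative input is the Pohozaev identity applied on spheres $\partial B_r \cap \Omega_k$ (with the boundary contribution from $\partial \mathbb R^n_+$ taken into account). The first step is to record the equation satisfied by $\tilde u_k$: one has $-\Delta \tilde u_k = \sigma_k^{\frac{n+2}2}g(\sigma_k^{-\frac{n-2}2}\tilde u_k)$ in $\Omega_k$ and $\partial_n \tilde u_k = \sigma_k^{\frac n2}h(\sigma_k^{-\frac{n-2}2}\tilde u_k)$ on the flat part of $\partial \Omega_k$; the point of the scaling is that $\sigma_k = \mathrm{dist}(q_k,\Sigma_k\setminus\{q_k\})$, so the rescaled blowup set has no other points in $B_1$, i.e. the only blowup point of $\tilde u_k$ in, say, $B_{3/4}\cap\Omega_k$ is the origin (this is where (\ref{aug15e10}) and (\ref{aug15e11}) come from).

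Next I would set up the standard isolated-simple-blowup machinery. Write $\bar u_k(r)$ for the spherical average of $\tilde u_k$ over $\partial B_r\cap\Omega_k$ (a half-sphere when the origin is a boundary point, a full sphere in the interior case). Using the Harnack inequality with boundary terms (Lemma 6.2 of \cite{zhang1} or Han--Li \cite{hanli}), together with the bound $\tilde u_k(y)\le C|y|^{-\frac{n-2}2}$ from (\ref{aug15e10}) which makes the coefficients $a_k,b_k$ in the linearized equation bounded on each annulus $B_{2r}\setminus B_r$ after rescaling, one gets $\max_{\partial B_r\cap\Omega_k}\tilde u_k \le C\min_{\partial B_r\cap\Omega_k}\tilde u_k$ for all $r\le 3/4$, with $C$ independent of $k$ and $r$. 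The core of the argument is then to show that $w_k(r):=r^{\frac{n-2}2}\bar u_k(r)$ is essentially monotone: more precisely, one proves that there is no ``bump'' between the bubble scale and $r=2/3$. If (\ref{isosimple1}) failed along a subsequence, we could find $\varepsilon_k\to 0$ and points $y_k$ with $|y_k|=:r_k\to 0$ (faster than any fixed rate relative to the bubble width) where $\tilde u_k(0)\tilde u_k(y_k)r_k^{n-2}\to\infty$; rescaling $\tilde u_k$ around $0$ by its maximum we would get, away from the bubble, a nontrivial limiting harmonic function with a singularity structure incompatible with the Pohozaev sign condition coming from $c_h>0$ and $GH_1$.

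Concretely, the decisive step is the Pohozaev identity. Integrating $(x\cdot\nabla \tilde u_k + \frac{n-2}2\tilde u_k)$ against the equation over $B_r\cap\Omega_k$ produces a boundary integral over $\partial B_r$ (the ``$P(r)$'' functional, controlling $\int_{\partial B_r}(\frac12|\nabla\tilde u_k|^2 - |\partial_\nu\tilde u_k|^2 - \ldots)$), a boundary integral over the flat piece $B_r\cap\partial\mathbb R^n_+$ involving $h$, and a bulk term involving $g(\sigma_k^{-\frac{n-2}2}\tilde u_k)$ that is controlled using $GH_1$ (the monotonicity of $s^{-\frac{n+2}{n-2}}g(s)$) and $GH_2$ (the behavior near $s=0$, needed exactly where $\tilde u_k$ is small). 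The sign of the flat-boundary term is governed by $c_h>0$: the term $\int s^{-\frac n{n-2}}h(\sigma_k^{-\ldots}\tilde u_k)\tilde u_k^{\frac{2n}{n-2}}\cdot(\text{geometry})$ has a definite sign that, combined with the interior term, forces $P(r)$ to be bounded below; but if $\tilde u_k$ had a secondary ``bump'' or failed to decay harmonically, $P(r)$ would have to be negative and large in absolute value, a contradiction. I expect the main obstacle to be precisely this Pohozaev bookkeeping: keeping track of the flat-boundary contribution with the correct sign when the blowup point sits on $\partial\mathbb R^n_+$, handling the region where $\tilde u_k$ is small (using $GH_2$ rather than just $GH_1$, which is why $GH_2$ is hypothesized in Theorem \ref{mainthm1}), and showing the error terms from $g(s) - As^{\frac{n+2}{n-2}}$ and $h(s) - c_h s^{\frac n{n-2}}$ are genuinely lower order uniformly in $k$. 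Once $P(r)$ is shown to be bounded and of the right sign for all $r\in(\text{bubble scale},\,2/3)$, a now-standard ODE/integral argument on $\bar u_k(r)$ converts this into the pointwise bound (\ref{isosimple1}).
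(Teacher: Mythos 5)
Your route is genuinely different from the paper's. The paper proves (\ref{isosimple1}) without any Pohozaev identity: after rescaling by $\tilde u_k(0)$ it proves Lemmas \ref{har1} and \ref{har2} by the method of moving spheres, comparing $v_k$ with its Kelvin transforms $v_k^{\lambda}$ (using $GH_1$, $GH_2$ to get the differential inequalities for $w_{\lambda,k}$ and the boundary comparison, and the Caffarelli--Gidas--Spruck and Li--Zhu classification theorems to pin down the limit and the critical radius), obtaining $\min_{\partial B_r\cap\tilde\Omega_k}v_k\le Cr^{2-n}$ for all intermediate radii; this is then upgraded to the pointwise bound by the spherical Harnack inequality on dyadic annuli (the one part of your sketch that matches the paper). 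The Pohozaev identity enters the paper only afterwards, in Proposition \ref{sing-apart}, to show that distinct blowup points stay a definite distance apart. A Pohozaev-based proof of isolated-simpleness in the spirit of Li, Chen--Lin and Han--Li is in principle conceivable here, but your sketch does not supply the ingredients that make that machinery run.

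The concrete gap is at your decisive step: ``if (\ref{isosimple1}) failed, rescaling around $0$ would produce a nontrivial limiting harmonic function whose singularity structure contradicts the Pohozaev sign condition.'' To extract any harmonic limit of $\tilde u_k(0)\,\tilde u_k$ (or of a further rescaling at an intermediate scale) away from the origin, you need precisely a uniform bound of the form $\tilde u_k(0)\tilde u_k(y)|y|^{n-2}\le C$ on the region where you pass to the limit --- the estimate being proved. The inputs you actually have, (\ref{aug15e10}) and the spherical Harnack inequality, only give $\tilde u_k(y)\le C|y|^{-(n-2)/2}$, which after multiplication by $\tilde u_k(0)$ is unbounded and yields no limit, so the ``limiting harmonic function'' and the ensuing sign contradiction are unjustified as stated. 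In the classical two-step machinery this circularity is broken by first proving the decay estimate under the extra hypothesis that the blowup is isolated simple (unique critical point of $r^{(n-2)/2}\bar u_k(r)$), via a maximum-principle comparison with a barrier like $|y|^{2-n+\delta}$ (with the boundary condition handled through $GH_1$/$GH_2$, and here additionally through the non-scale-invariance of $g,h$) and a Pohozaev identity to remove $\delta$; only then can one argue by contradiction at a putative second critical radius, where the rescaled sequence is isolated simple by construction, and run the sign computation. Your proposal contains neither the comparison-function lemma nor the verification of isolated-simpleness for the rescaled sequence, so the core of the argument is missing; the Pohozaev bookkeeping you describe (nonnegativity of the bulk term from $GH_1$, of the flat-boundary term from the monotonicity of $s^{-\frac{n}{n-2}}h(s)$, smallness of the $G_k$, $H_k$ terms from $GH_2$) is essentially what the paper does in Proposition \ref{sing-apart}, but it only becomes usable after the harmonic limit is available.
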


\begin{rem} The meaning of Proposition \ref{prop3.2} is each isolated blowup point is also isolated simple.
\end{rem}

\begin{proof}

Direct computation shows that $\tilde u_k$ satisfies
\begin{equation}\label{tildeuk}
\left\{\begin{array}{ll}
\Delta \tilde u_k(y)+\sigma_k^{\frac{n+2}2}g(\sigma_k^{-\frac{n-2}2}\tilde u_k)=0, \quad \mbox{ in }\Omega_k, \\
\\
\partial_n \tilde u_k(y)=\sigma_k^{\frac n2}h(\sigma_k^{-\frac{n-2}2}\tilde u_k),\quad \mbox{ on }\quad \partial\Omega_k
\cap \{y_n=-\sigma_k^{-1}q_{kn}\},\\
\end{array}
\right.
\end{equation}

Let $\tilde M_k=\tilde u_k(0)$. By (\ref{aug15e11}) $\tilde M_k\to \infty$. Set
$$ v_k(z)=\tilde M_k^{-1} \tilde u_k(\tilde M_k^{-\frac 2{n-2}}z), \quad \mbox{ for } z\in \tilde \Omega_k, $$
where
$$\tilde \Omega_k:=\{z; \quad |z|\le \tilde M_k^{\frac 2{n-2}},\quad \tilde M_k^{-\frac 2{n-2}}z\in \Omega_k\} $$
Note that $v_k$ is defined on a bigger set, but for the proof of Proposition \ref{prop3.2} we only need to consider the part in
$\tilde \Omega_k$.

Direct computation gives
\begin{equation}\label{c2vk}
\left\{\begin{array}{ll}
\Delta v_k(z)+l_k^{-\frac{n+2}{n-2}}g(l_k v_k)=0,\quad z\in \tilde \Omega_k, \\
\\
\frac{\partial v_k}{\partial z_n}=l_k^{-\frac n{n-2}}h(l_k v_k),\quad \{z_n=-T_k\}\cap \partial \tilde \Omega_k.
\end{array}
\right.
\end{equation}
where $l_k=\sigma_k^{-\frac{n-2}2}\tilde M_k$ and $T_k=l_k^{\frac 2{n-2}}q_{kn}$.
We consider two cases.

\noindent{\bf Case one: $T_k\to \infty$. }

By the same argument as in the proof of Proposition \ref{prop3.1}, we know $v_k\to V$ in $C^{1,\alpha}_{loc}(\mathbb R^n)$ where $V$ solves (\ref{global1}).
Thus there exist $R_k\to \infty$ such that
$$\|v_k-V\|_{C^{1,\alpha}(B_{R_k})}\le CR_k^{-1}. $$
Clearly (\ref{isosimple1}) holds for $|z|\le \tilde M_k^{-\frac{2}{n-2}}R_k$, we just need to prove (\ref{isosimple1}) for $|z|>\tilde M_k^{-\frac{2}{n-2}}R_k$. Since $V(0)=1$ is the maximum point of $V$,
$$V(z)=(1+\frac{A}{n(n-2)}|z|^2)^{-\frac{n-2}2}, \quad z\in \mathbb R^n. $$

\begin{lem}\label{har1} There exists $k_0>1$ such that for all $k\ge k_0$ and
$r\in (R_k, \tilde M_k^{\frac 2{n-2}})$,
\begin{equation}\label{aug16e1}
\min_{\partial B_r\cap \tilde \Omega_k} v_k\le 2(\frac{n(n-2)}A)^{\frac{n-2}2}r^{2-n}.
\end{equation}
\end{lem}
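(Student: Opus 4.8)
The plan is to argue by contradiction. Suppose the inequality fails: then along a subsequence there exist radii $r_k\in(R_k,\tilde M_k^{2/(n-2)})$ with $\min_{\partial B_{r_k}\cap\tilde\Omega_k}v_k>2(n(n-2)/A)^{(n-2)/2}r_k^{2-n}$. The first observation is that the failure cannot happen for $r$ comparable to $R_k$: since $v_k\to V$ in $C^1_{loc}$ with $V(z)=(1+\frac{A}{n(n-2)}|z|^2)^{-(n-2)/2}\sim(n(n-2)/A)^{(n-2)/2}|z|^{2-n}$, the bound (2.28) holds with room to spare on $\partial B_{R_k}$, hence also on $\partial B_r$ for $r\le K R_k$ for any fixed $K$. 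So $r_k/R_k\to\infty$, and in particular the ``bad'' shells live in the region where $v_k$ is already small (of size $\lesssim R_k^{2-n}$).

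The main step is a comparison/supersolution argument. On the annular region $\mathcal A_k:=(B_{r_k}\setminus B_{R_k})\cap\tilde\Omega_k$ one wants to compare $v_k$ with a harmonic-type barrier. The point of $GH_1$ and $GH_2$ (as used already in the proof of Proposition 2.1 and 2.2) is that the zeroth-order coefficients $a_k(z)=l_k^{-(n+2)/(n-2)}g(l_kv_k)/v_k$ and $b_k(z)=l_k^{-n/(n-2)}h(l_kv_k)/v_k$ obey $|a_k|\le C v_k^{4/(n-2)}$ and $|b_k|\le Cv_k^{2/(n-2)}$, so on $\mathcal A_k$ they are bounded by $C R_k^{-4}$ and $CR_k^{-2}$ respectively; these are so small that the operators $\Delta+a_k$ and $\partial_n+b_k$ barely perturb the harmonic/Neumann structure. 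One then builds a comparison function of the form $w_k(z)=\varepsilon_k(|z|^{2-n}-r_k^{2-n})$ with $\varepsilon_k$ chosen so that $w_k$ dominates $v_k$ on $\partial B_{R_k}\cap\tilde\Omega_k$ (possible since there $v_k\le(1+o(1))(n(n-2)/A)^{(n-2)/2}R_k^{2-n}$ and $w_k\approx\varepsilon_k R_k^{2-n}$), so $\varepsilon_k\to(n(n-2)/A)^{(n-2)/2}$; on $\partial B_{r_k}\cap\tilde\Omega_k$ we have $w_k=0\le v_k$; on the flat boundary portion $\{z_n=-T_k\}$ the normal derivative of $|z|^{2-n}$ has a favorable sign (it points ``inward'' because $T_k\to\infty$ so the hyperplane is far from the origin — actually the outward normal is $-e_n$ and $\partial_n|z|^{2-n}=(2-n)|z|^{-n}z_n<0$ there since $z_n<0$, which is the wrong sign, so one instead uses that $T_k\to\infty$ to note the flat boundary only meets $\mathcal A_k$ when $r_k>T_k$, and handles this by slightly enlarging the barrier or by an even reflection across $\{z_n=-T_k\}$ that turns the Neumann condition into an interior equation on the doubled domain). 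The maximum principle (with the small boundary coefficient $b_k$ absorbed, e.g. via the Han–Li / Lemma 6.2 of [zhang1] boundary maximum principle already cited) then gives $v_k\le w_k\le\varepsilon_k|z|^{2-n}$ throughout $\mathcal A_k$, and in particular $\min_{\partial B_{r_k}\cap\tilde\Omega_k}v_k\le(1+o(1))(n(n-2)/A)^{(n-2)/2}\cdot 0$... — more carefully, one evaluates at a point slightly inside $\partial B_{r_k}$, or runs the barrier with $r_k$ replaced by $2r_k$, to conclude $\min_{\partial B_{r_k}}v_k\le(1+o(1))(n(n-2)/A)^{(n-2)/2}r_k^{2-n}<2(n(n-2)/A)^{(n-2)/2}r_k^{2-n}$ for $k$ large, contradicting the assumed failure.

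I expect the main obstacle to be the bookkeeping at the flat boundary $\{z_n=-T_k\}$: one must make sure the comparison argument is valid on a domain with a Neumann-type boundary piece and that the barrier $|z|^{2-n}$ (centered at the origin, which sits at height $T_k$ above the flat face) interacts correctly with that face. The clean fix is the even reflection across $\{z_n=-T_k\}$ used together with the smallness $|b_k|\le CR_k^{-2}$, which reduces everything to an interior maximum principle on the doubled annulus, at the cost of a controlled error term that is swept into the $(1+o(1))$ factor. The other routine-but-nontrivial point is verifying the coefficient bounds on $\mathcal A_k$ from $GH_1$–$GH_2$, but this is exactly the computation already carried out in the proofs of Propositions 2.1 and 2.2 and can be quoted.
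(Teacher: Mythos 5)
There is a genuine gap here: your barrier argument runs in the wrong direction. Since $-\Delta v_k=a_k v_k\ge 0$ in the annulus, $v_k$ is superharmonic there, and comparison with the harmonic barrier $w_k=\varepsilon_k(|z|^{2-n}-r_k^{2-n})$ via the maximum principle can only produce \emph{lower} bounds for $v_k$ from its boundary values (this is exactly how the paper obtains $v_k\ge\phi$ in Lemma \ref{lem3.2}); it cannot give $v_k\le w_k$ inside. Your boundary data do not even set up a one-sided comparison: you arrange $w_k\ge v_k$ on $\partial B_{R_k}$ but $w_k=0\le v_k$ on $\partial B_{r_k}$, so $v_k-w_k$ changes sign on the boundary and no maximum principle yields $v_k\le w_k$ in between. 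More fundamentally, the pointwise bound $v_k\le(1+o(1))\left(\frac{n(n-2)}{A}\right)^{\frac{n-2}{2}}|z|^{2-n}$ that you try to propagate through the annulus is essentially the isolated-simple estimate (\ref{isosimple1}) which Proposition \ref{prop3.2} is in the process of proving; at this stage the only available upper bound from the selection process is $v_k(z)\le C|z|^{-\frac{n-2}{2}}$ (the rescaled form of (\ref{aug15e10})), which is far weaker, and nothing yet excludes $v_k$ from being large well inside the annulus even though it is controlled on the two bounding spheres. The smallness of $a_k$, $b_k$ on the shells does not cure this, because what is needed is not a perturbation statement but an inequality in the direction the maximum principle does not provide; the even reflection across $\{z_n=-T_k\}$ is likewise moot.

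The paper's proof is of a different nature: it is a moving-spheres argument on the Kelvin transforms $v_k^{\lambda}(z)=(\lambda/|z|)^{n-2}v_k(\lambda^2z/|z|^2)$, and it only ever uses the \emph{minimum} of $v_k$ on the outer sphere, never a pointwise bound in the annulus. The contradiction hypothesis (minimum on $|z|=r_k$ at least $2(\frac{n(n-2)}{A})^{\frac{n-2}{2}}r_k^{2-n}$ --- this is where the factor $2$ enters) is used precisely to guarantee $v_k>v_k^{\lambda}$ on $|z|=r_k$ for all $\lambda$ in a fixed interval around the critical radius $(\frac{n(n-2)}{A})^{1/2}$, so the moving sphere can never stop at the outer boundary. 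The monotonicity in $GH_1$ gives the differential inequality for $w_{\lambda,k}=v_k-v_k^{\lambda}$ on the set $O_{\lambda}$, the flat boundary $\{z_n=-T_k\}$ is handled by Hopf's lemma together with the fact that $T_k\to\infty$ forces $\partial_n v_k^{\lambda}>N_k(v_k^{\lambda})^{\frac{n}{n-2}}$ with $N_k\to\infty$ while $\partial_n v_k\le c_h v_k^{\frac{n}{n-2}}$, and the sphere is then moved past the critical radius, contradicting $V<V^{\lambda}$ for $\lambda>(\frac{n(n-2)}{A})^{1/2}$. To repair your argument you would need to replace the barrier-from-above by this kind of moving-spheres comparison (or another device that exploits only spherical minima), since no supersolution comparison can deliver the claimed upper bound.
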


\noindent{\bf Proof of Lemma \ref{har1}:}

Suppose (\ref{aug16e1}) does not hold, then there exist $r_k$ such that
\begin{equation}\label{aug16e2}
v_k(z)\ge 2(\frac{n(n-2)}A)^{\frac{n-2}2}r_k^{2-n}, \quad |z|=r_k, z\in \tilde \Omega_k.
\end{equation}
Clearly $r_k\ge R_k$.

Let
$$v_k^{\lambda}(z)=(\frac{\lambda}{|z|})^{n-2}v_k(z^{\lambda}), \quad z^{\lambda}=\frac{\lambda^2 z}{|z|^2}. $$
The equation of $v_k^{\lambda}$, by direct computation, is
\begin{equation}\label{vlam}
\Delta v_k^{\lambda}(z)+(\frac{\lambda}{|z|})^{n+2}l_k^{-\frac{n+2}{n-2}}g(l_k(\frac{|z|}{\lambda})^{n-2}v_k^{\lambda}(z))=0,
\quad \mbox{ in }\Sigma_{\lambda}
\end{equation}
where
$$\Sigma_{\lambda}:=\{z\in \tilde \Omega_k;\quad |\lambda|<|z|<r_k \,\, \}.$$
Clearly $v_k^{\lambda}\to V^{\lambda}$ in $C^{1,\alpha}_{loc}(\mathbb R^n)$ for fixed $\lambda>0$.
By direct computation
\begin{align*}
V(z)>V^{\lambda}(z), \mbox{ for } \lambda\in (0,(\frac{n(n-2)}A)^{1/2}), \quad |z|>\lambda \\
V(z)<V^{\lambda}(z), \mbox{ for } \lambda>(\frac{n(n-2)}A)^{1/2}, \quad |z|>\lambda.
\end{align*}
We shall apply the method of moving spheres for $\lambda\in (\frac 12(\frac{n(n-2)}A)^{1/2}, 2(\frac{n(n-2)}A)^{1/2})$. First we prove that for $\lambda_0=\frac 12(\frac{n(n-2)}A)^{1/2}$,
\begin{equation}\label{aug16e3}
v_k(z)>v_k^{\lambda_0}(z),\quad z\in \Sigma_{\lambda}.
\end{equation}
To prove (\ref{aug16e3}) we first observe that $v_k>v_k^{\lambda_0}$ in $B_R\setminus B_{\lambda}$ for any fixed $R$ large. Indeed, $v_k=v_k^{\lambda_0}$ on $\partial B_{\lambda_0}$. On $\partial B_{\lambda_0}$ we have $\partial_{\nu}V>\partial_{\nu}V^{\lambda_0}$. Thus the $C^{1,\alpha}$ convergence of $v_k$ to $V$ gives that $v_k>v_k^{\lambda_0}$ near $\partial B_{\lambda_0}$. Then by the uniform convergence we further know that (\ref{aug16e3}) holds on $B_R\setminus B_{\lambda_0}$. On $\partial B_R$, we have
\begin{equation}\label{sept3e2}
v_k(z)\ge ((\frac{n(n-2)}A)^{\frac{n-2}2}-\epsilon)|z|^{2-n}, \quad |z|=R
\end{equation}
and
\begin{equation}\label{sept3e1}
v_k^{\lambda_0}(z)\le ((\frac{n(n-2)}A)^{\frac{n-2}2}-2\epsilon)|z|^{2-n}, \quad |z|\ge R
\end{equation}
for some $\epsilon>0$ independent of $k$.
Next we shall use maximum principle to prove that
\begin{equation}\label{greaterhar}
v_k(z)>((\frac{n(n-2)}A)^{\frac{n-2}2}-2\epsilon)|z|^{2-n}> v_k^{\lambda_0}(z),\quad z\in \Sigma_{\lambda_0}\setminus B_R.
\end{equation}
The proof of (\ref{greaterhar}) is by contradiction. We shall compare $v_k$ and
$$f_k:=((\frac{n(n-2)}A)^{\frac{n-2}2}-2\epsilon)|z|^{2-n}. $$ Clearly $v_k-f_k$ is super harmonic in $\Sigma_{\lambda_0}-B_R$ and,
by (\ref{sept3e2}),(\ref{sept3e1}) and (\ref{aug16e2}), $v_k-f_k>0$ on $\partial B_R$ and $\partial \Sigma_{\lambda_0}\cap (\mathbb R^n_+\setminus B_R)$. If there exists $z_0\in \partial\Sigma_{\lambda_0}\cap \{z_n=-T_k\}$ and

$$0>v_k(z_0)-f_k(z_0)=\min_{\Sigma_{\lambda_0}\setminus B_R}v_k-f_k $$
we would have
\begin{equation}\label{aug16e11}
0<\partial_n(v_k-f_k)(z_0)=l_k^{-\frac{n}{n-2}}h(l_k v_k(z_0))-\partial_n f_k(z_0).
\end{equation}
Easy to verify $\partial_n f_k(z_0)>N_k f_k(z_0)^{\frac{n}{n-2}}$ for some $N_k\to \infty$. However by $GH_1$
$$l_k^{-\frac{n}{n-2}}h(l_k v_k(z_0))\le C v_k(z_0)^{\frac n{n-2}}. $$
Easy to see it is impossible to have $v_k(z_0)<f_k(z_0)$ and (\ref{aug16e11}). (\ref{greaterhar}) is established.

Before we employ the method of moving spheres we set
$$O_{\lambda}:=\{z\in \Sigma_{\lambda};\quad v_k(z)<\min\bigg ( (\frac{|z|}{\lambda})^{n-2},2\bigg) v_k^{\lambda}(z)\quad \}. $$
This is the region where maximum principle needs to be applied.

By $GH_1$ we have
\begin{align*}
&(\frac{\lambda}{|z|})^{n+2} l_k^{-\frac{n+2}{n-2}} g(l_k (\frac{|z|}{\lambda})^{n-2}v_k^{\lambda})\\
=& \bigg ( (\frac{|z|}{\lambda})^{n-2} l_k v_k^{\lambda} )^{-\frac{n+2}{n-2}}g((\frac{|z|}{\lambda})^{n-2} l_k v_k^{\lambda})
(v_k^{\lambda})^{\frac{n+2}{n-2}}\\
\le & (l_k v_k)^{-\frac{n+2}{n-2}}g(l_k v_k)(v_k^{\lambda})^{\frac{n+2}{n-2}},\quad \mbox{ in } O_{\lambda}.
\end{align*}
Therefore in $O_{\lambda}$ we have
\begin{equation}\label{aug19e1}
\Delta v_k^{\lambda}+(l_k v_k)^{-\frac{n+2}{n-2}}g(l_k v_k) (v_k^{\lambda})^{\frac{n+2}{n-2}}\ge 0,\quad \mbox{ in } O_{\lambda}.
\end{equation}
The equation for $v_k$ can certainly be written as
\begin{equation}\label{aug19e2}
\Delta v_k +(l_k v_k)^{-\frac{n+2}{n-2}} g(l_k v_k) v_k^{\frac{n+2}{n-2}}=0.
\end{equation}
Let $w_{\lambda,k}=v_k-v_k^{\lambda}$, we have, from (\ref{aug19e1}) and (\ref{aug19e2})
\begin{equation}\label{wlam}
\Delta w_{\lambda,k}+n(n-2)(l_k v_k)^{-\frac{n+2}{n-2}}g(l_k v_k) \xi_k^{\frac{4}{n-2}} w_{\lambda,k}\le 0, \quad \mbox{ in } O_{\lambda},
\end{equation}
where $\xi_k$ is obtained from the mean value theorem.

Now we apply the method of moving spheres to $w_{\lambda,k}$. Let
$$\bar \lambda_k=\inf \{\lambda\in [(\frac{n(n-2)}A)^{\frac 12}-\epsilon_0, (\frac{n(n-2)}A)^{\frac 12}+\epsilon_0];\quad v_k>v_k^{\mu} \mbox{ in }\Sigma_{\mu}, \forall \mu>\lambda\,\, \} $$
where $\epsilon_0>0$ is chosen to be independent of $k$ and
\begin{equation}\label{sept16e1}
v_k(z)>v^{\lambda}(z),\quad |z|=r_k,\quad z\in \tilde \Omega_k,\quad \forall \lambda\in [\lambda_0,\lambda_1].
\end{equation}
From (\ref{aug16e2}) we see that $\epsilon_0$ can be chosen easily.
By (\ref{aug16e3}), $\bar \lambda_k>(\frac{n(n-2)}A)^{\frac 12}-\epsilon_0$. We claim that $\bar \lambda_k=(\frac{n(n-2)}A)^{\frac 12}+\epsilon_0$. Suppose this is not the case we have $\bar\lambda_k<(\frac{n(n-2)}A)^{\frac 12}+\epsilon_0$. By continuity $w_{\bar\lambda_k,k}\ge 0$ and by (\ref{aug16e2}) $w_{\bar \lambda_k,k}>0$ on the outside boundary: $\partial \Sigma_{\bar \lambda_k}\setminus
(\partial B_{\bar \lambda_k}\cup \{z_n=-T_k\})$.
By (\ref{wlam}), if $\min_{\bar \Sigma_{\bar \lambda_k}}w_{\bar \lambda_k,k}=0$, the minimum will have to appear on $\partial \Sigma_{\bar \lambda_k}$. From (\ref{sept16e1}) we see that the minimum does not appear on $\partial \Sigma_{\bar \lambda_k}\setminus (\partial B_{\bar \lambda_k}
\cup \{z_n=-T_k\})$. If there exists $x_0\in \partial \Sigma_{\bar \lambda_k}\cap \{z_n=-T_k\}$, we have
$$0<\partial_{z_n}w_{\bar \lambda_k,k}(x_0) $$
Note that we have strict inequality because of Hopf Lemma. On the other hand, using $T_k\to \infty$ and elementary estimates we have
$$\frac{\partial v_k^{\bar \lambda_k}}{\partial z_n}>N_k (v_k^{\bar \lambda_k})^{\frac n{n-2}}, \quad \mbox{ in } O_{\bar \lambda_k}\cap
\{z_n=-T_k\},\quad
\mbox{ for some } N_k\to \infty. $$
For $v_k$, $GH_1$ implies
$$\partial_{z_n}v_k\le c_h v_k^{\frac{n}{n-2}},\quad \mbox{ in } O_{\bar \lambda_k}\cap \{z_n=-T_k\} $$
where $c_h=\lim_{s\to \infty} s^{-\frac n{n-2}} h(s)$.
Then it is easy to see that $w_{\bar \lambda_k}>0$ on $\{z_n=-T_k\}$. Then Hopf Lemma and the continuity lead to a contradiction of the definition of $\bar \lambda_k$.  Thus we have proved $\bar \lambda_k=(\frac{n(n-2)}A)^{\frac 12}+\epsilon_0$. However $V<V^{\lambda}$ for $|y|>\lambda$ if $\lambda>(\frac{n(n-2)}A)^{\frac 12}$. So it is impossible to have
$\lim_{k\to \infty} \bar \lambda_k>(\frac{n(n-2)}A)^{\frac 12}$. This contradiction proves (\ref{aug16e1}) under {\bf Case one}.
Lemma \ref{har1} is established.

\medskip

From Lemma \ref{har1} we further prove the spherical Harnack inequality for $v_k$. For fixed $k$, consider $2R_k\le r\le \frac 12 \tilde M_k^{\frac 2{n-2}}$ and let
$$\tilde v_k(z)=r^{\frac{n-2}2}v_k(rz). $$
By (\ref{aug15e10}), $\tilde v_k(z)\le C$.
Direct computation yields
$$\left\{\begin{array}{ll}
\Delta \tilde v_k(z)+r^{\frac{n+2}2}l_k^{-\frac{n+2}{n-2}}g(l_k r^{-\frac{n-2}2} \tilde v_k)=0, \quad \frac 12<|z|<2, rz\in \tilde \Omega_k,\\
\\
\partial_n \tilde v_k=r^{\frac n2}l_k^{-\frac n{n-2}}h(r^{-\frac{n-2}2}l_k\tilde v_k),\quad \partial' \tilde \Omega_k.
\end{array}
\right.
$$
Let
\begin{align*}
a_k=r^{\frac{n+2}2}l_k^{-\frac{n+2}{n-2}}g(l_k r^{-\frac{n-2}2} \tilde v_k)/\tilde v_k \\
b_k=r^{\frac n2}l_k^{-\frac n{n-2}}h(r^{-\frac{n-2}2}l_k\tilde v_k)/\tilde v_k
\end{align*}
By the definition of of $l_k$ and $r$ we see that $r=o(1)l_k^{\frac{2}{n-2}}$.
Using the assumptions of $g$ we have
$$a_k(z)\le \left\{\begin{array}{ll}
g(1)\tilde v_k^{\frac{4}{n-2}}\le C,\quad  \mbox{ if } l_kr^{-\frac{n-2}2}\tilde v_k(z)\ge 1,\\
\\
Cr^2l_k^{-\frac{4}{n-2}}=o(1),\quad \mbox{if } l_kr^{-\frac{n-2}2}\tilde v_k(z)\le 1,
\end{array}
\right.
$$
and
$$|b_k(z)|\le \left\{\begin{array}{ll}
c_h \tilde v_k^{\frac{2}{n-2}}\le C,\quad \mbox{ if } l_kr^{-\frac{n-2}2}\tilde v_k(z)\ge 1 \\
\\
Crl_k^{-\frac{2}{n-2}}=o(1),\quad \mbox{if } l_kr^{-\frac{n-2}2}\tilde v_k(z)\le 1,
\end{array}
\right.
$$

Hence
$a_k$ and $b_k$ are both bounded functions.

Consequently the equation for $\tilde v_k$ can be written as
$$\left\{\begin{array}{ll}
\Delta \tilde v_k(z)+a_k \tilde v_k=0, \quad \frac 12<|z|<2, rz\in \tilde \Omega_k,\\
\\
\partial_n \tilde v_k=b_k \tilde v_k,\quad \partial \tilde \Omega_k\cap\{z_n=-T_k/r\}.
\end{array}
\right.
$$
Clearly we apply classical Harnack inequality for two cases: either $T_k/r>1$ or $T_k/r\le 1$. In the first case we have
$$\max_{|z|=3/4} \tilde v_k(z)\le C \min_{|z|=3/4}\tilde v_k. $$
In the second case we have
 $$\max_{|z|=1,z_n\ge -T_k/r} \tilde v_k(z)\le C \min_{|z|=1,z_n\ge -T_k/r}\tilde v_k. $$
 Clearly (\ref{isosimple1}) is implied. Proposition \ref{prop3.2} is established for {\bf Case one}.

\bigskip

{\bf Case two: $\lim_{k\to \infty} T_k=T$ }

\medskip

Recall that $v_k$ satisfies (\ref{c2vk}).

By the same argument as in the proof of Proposition \ref{prop3.1}, we know $v_k\to V$ in $C^{1,\alpha}_{loc}(\mathbb R^n\cap \{y_n\ge -T\})$ where $V$ solves (\ref{global2}).
Thus there exist $R_k\to \infty$ such that
$$\|v_k-V\|_{C^{1,\alpha}(B_{R_k}^{-T})}\le CR_k^{-1}. $$
Clearly (\ref{isosimple1}) holds for $|y|\le \tilde M_k^{-\frac{2}{n-2}}R_k\cap\{y_n\ge -T_k\}$, we just need to prove (\ref{isosimple1}) for $\{|y|>\tilde M_k^{-\frac{2}{n-2}}R_k\}\cap \{y_n\ge -T_k\}$. Since $V(0)=1$ is the maximum point of $V$,
$$V(z)=(1+\frac{A}{n(n-2)}|z|^2)^{-\frac{n-2}2}, \quad z\in \mathbb R^n. $$

\begin{lem}\label{har2} There exists $k_0>1$ such that for all $k\ge k_0$ and
$r\in (R_k, \tilde M_k^{\frac 2{n-2}})$,
\begin{equation}\label{aug16e1a}
\min_{\partial B_r\cap \tilde \Omega_k} v_k\le 2(\frac{n(n-2)}A)^{\frac{n-2}2}r^{2-n}.
\end{equation}
\end{lem}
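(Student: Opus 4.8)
\noindent\textbf{Proof of Lemma \ref{har2} (plan).}
The plan is to mirror the proof of Lemma \ref{har1}, replacing the use of $T_k\to\infty$ by the exact identity $T=nc_h/A$ forced by the boundary condition in (\ref{global2}). Arguing by contradiction, assume (\ref{aug16e1a}) fails; then there are $r_k\ge R_k$ with $v_k(z)\ge 2(n(n-2)/A)^{(n-2)/2}r_k^{2-n}$ for $|z|=r_k$, $z\in\tilde\Omega_k$. As before set $v_k^{\lambda}(z)=(\lambda/|z|)^{n-2}v_k(z^{\lambda})$, $z^{\lambda}=\lambda^2z/|z|^2$, so that $v_k^{\lambda}$ solves the Kelvin-transformed equation (\ref{vlam}) in $\Sigma_{\lambda}=\{z\in\tilde\Omega_k:\lambda<|z|<r_k\}$ and $v_k^{\lambda}\to V^{\lambda}$ in $C^{1,\alpha}_{loc}$, where $V(z)=(1+\tfrac{A}{n(n-2)}|z|^2)^{-(n-2)/2}$ and hence $V^{\lambda}(z)=\lambda^{n-2}(\tfrac{A}{n(n-2)}\lambda^4+|z|^2)^{-(n-2)/2}$. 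The elementary fact I would record at the outset is the identity
\begin{equation*}
\partial_n V^{\lambda}(z)=\frac{(n-2)(-z_n)}{\lambda^2}\,V^{\lambda}(z)^{\frac{n}{n-2}}\quad\text{on every hyperplane }\{z_n=\mathrm{const}\},
\end{equation*}
so on $\{z_n=-T\}$ the Robin coefficient of $V^{\lambda}$ equals $\tfrac{(n-2)T}{\lambda^2}$, which is exactly $c_h$ when $\lambda=\lambda_*:=(n(n-2)/A)^{1/2}$ (because $T=nc_h/A$), exceeds $c_h$ for $\lambda<\lambda_*$, and is below $c_h$ for $\lambda>\lambda_*$; recall also that $V>V^{\lambda}$ (resp.\ $<$) for $|z|>\lambda$ when $\lambda<\lambda_*$ (resp.\ $>\lambda_*$).

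I would initiate the moving spheres as in Lemma \ref{har1}: prove $v_k>v_k^{\lambda_0}$ in $\Sigma_{\lambda_0}$ with $\lambda_0=\tfrac12\lambda_*$, using the $C^{1,\alpha}$ convergence near $\partial B_{\lambda_0}$, the uniform convergence on $B_R\setminus B_{\lambda_0}$, and for $|z|>R$ the comparison of $v_k$ with the harmonic barrier $f_k=(\lambda_*^{n-2}-2\epsilon)|z|^{2-n}$. The only change from Case one is that a negative interior minimum of $v_k-f_k$ can now occur only on $\{z_n=-T_k\}$; there Hopf's lemma together with $\partial_n v_k\le c_hv_k^{n/(n-2)}$ (from $GH_1$, using $c_h>0$) would force $(n-2)T_k(\lambda_*^{n-2}-2\epsilon)^{-2/(n-2)}<c_h$, i.e.\ $T_k<\tfrac{c_h}{n-2}(\lambda_*^{n-2}-2\epsilon)^{2/(n-2)}$. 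Since $(\lambda_*^{n-2}-2\epsilon)^{2/(n-2)}<\lambda_*^2$ strictly, the right-hand side is a fixed distance below $\tfrac{c_h}{n-2}\lambda_*^2=T$, so $T_k\to T$ rules this out for large $k$. Hence $v_k>f_k\ge v_k^{\lambda_0}$ on $\Sigma_{\lambda_0}\setminus B_R$ and $v_k>v_k^{\lambda_0}$ throughout $\Sigma_{\lambda_0}$.

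Then I would carry out the sweep as in Lemma \ref{har1}: increase $\lambda$, using $GH_1$ to obtain (\ref{aug19e1}) in $O_{\lambda}=\{z\in\Sigma_{\lambda}:v_k<\min((|z|/\lambda)^{n-2},2)v_k^{\lambda}\}$, so that $w_{\lambda,k}=v_k-v_k^{\lambda}$ satisfies (\ref{wlam}) there; the outer sphere $\partial B_{r_k}$ stays strictly positive for $w_{\lambda,k}$ whenever $\lambda\le\lambda_*+\epsilon_0$ ($\epsilon_0$ a fixed small constant, using the factor $2$ in the contradiction hypothesis), $w_{\lambda,k}=0$ with $\partial_{\nu}w_{\lambda,k}>0$ on $\partial B_{\lambda}$ by Hopf, and an interior zero of $w_{\lambda,k}$ is excluded by the maximum principle. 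The only obstruction to reaching $\lambda_*+\epsilon_0$ is a zero of $w_{\lambda,k}$ at some $x_0\in\{z_n=-T_k\}$ with $|x_0|>\lambda$. At such $x_0$ we have $v_k(x_0)=v_k^{\lambda}(x_0)$, Hopf gives $\partial_n v_k(x_0)>\partial_n v_k^{\lambda}(x_0)$, and $\partial_n v_k(x_0)\le c_h v_k(x_0)^{n/(n-2)}$; on the other hand the $C^1$ convergence $v_k\to V$ (which controls $v_k^{\lambda}$ on $\{z_n=-T_k\}$ both when $|x_0|$ is bounded and, via $x_0^{\lambda}\to0$, when $|x_0|$ is large), $T_k\to T$, and the identity above give $\partial_n v_k^{\lambda}(x_0)=\big(\tfrac{(n-2)T_k}{\lambda^2}+o(1)\big)v_k^{\lambda}(x_0)^{n/(n-2)}$. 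When $\lambda$ stays a fixed distance below $\lambda_*$ this coefficient exceeds $c_h$ and a contradiction follows; thus the sweep reaches every $\lambda<\lambda_*$, i.e.\ $\liminf_k\bar\lambda_k\ge\lambda_*$.

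The step I expect to be the real obstacle is the borderline window $\lambda\in(\lambda_*-o(1),\lambda_*+\epsilon_0)$, where $\tfrac{(n-2)T_k}{\lambda^2}=c_h+o(1)$, so that the sweep may stall at $\bar\lambda_k\to\lambda_*$ and the crude estimate above no longer closes. To push through it I would either (a) sharpen the boundary analysis near $\{z_n=-T_k\}$, using the explicit form of $V$ from Li-Zhu \cite{liandzhu} together with the strict monotonicity in $GH_1$ to extract the next-order term of $\partial_n w_{\lambda,k}(x_0)$ (equivalently, compare $v_k$ against a slightly subcritical modification of $v_k^{\lambda}$ obtained by a cut-off correction supported near the boundary) and show that its sign is incompatible with Hopf; or (b) if the sweep genuinely stalls at $\bar\lambda_k\to\lambda_*$, combine the resulting bound $v_k\ge v_k^{\bar\lambda_k}$ on $\Sigma_{\bar\lambda_k}$ with the factor-$2$ gap on $\partial B_{r_k}$ via a Pohozaev identity on $\Sigma_{\bar\lambda_k}$ to contradict the monotonicity of $g(s)s^{-(n+2)/(n-2)}$ in $GH_1$. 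In either case, once the sweep reaches some $\lambda>\lambda_*$ we have $v_k\ge v_k^{\lambda}$ in $\Sigma_{\lambda}$; passing to the limit yields $V\ge V^{\lambda}$ on compact subsets of $\{|z|>\lambda\}$, contradicting $V<V^{\lambda}$ there. This proves (\ref{aug16e1a}); the spherical Harnack inequality and (\ref{isosimple1}) then follow just as in Case one.
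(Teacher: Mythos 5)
Your plan follows the paper's general strategy (contradiction hypothesis on $\partial B_{r_k}$, Kelvin transform, moving spheres, contradiction with the limiting bubble), and you have correctly located the difficulty --- but the step you flag as ``the real obstacle'' is exactly the content of Lemma \ref{har2}, and your proposal does not close it. Keeping the inversion centered at the blowup point, which sits at height $T_k\to T=nc_h/A$ above the boundary $\{z_n=-T_k\}$, has two consequences. First, the inversion does not preserve the boundary hyperplane, so $v_k^{\lambda}$ restricted to $\{z_n=-T_k\}$ is not governed by the boundary condition of $v_k$; all you can extract is the asymptotic Robin identity $\partial_n v_k^{\lambda}=\bigl(\tfrac{(n-2)T_k}{\lambda^2}+o(1)\bigr)(v_k^{\lambda})^{n/(n-2)}$, with an $o(1)$ error coming only from $C^1$ convergence. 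Second, at $\lambda=\lambda_*$ this coefficient equals $c_h$ exactly (indeed $V^{\lambda_*}\equiv V$ and $V$ satisfies the boundary condition with equality), so in the window $\lambda\in(\lambda_*-o(1),\lambda_*+\epsilon_0)$ the comparison of $\partial_n v_k$ and $\partial_n v_k^{\lambda}$ at a boundary touching point degenerates and Hopf's lemma yields nothing. Reaching every $\lambda<\lambda_*$ is not enough, since $V\ge V^{\lambda}$ for $\lambda<\lambda_*$ is consistent; the contradiction needs $\lambda>\lambda_*$, so the borderline window cannot be skirted. Your remedies are speculative: (a) would require quantitative next-order control of $v_k-V$ near the boundary, which is not available at this stage, and there is no reason the next-order term has a favorable sign; (b) invokes a Pohozaev identity in an unformulated way (Pohozaev enters the paper only later, in Proposition \ref{sing-apart}, for a different purpose). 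As written, this is a genuine gap at the only point where Case two differs from Case one.

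The paper removes the obstacle by re-centering rather than by sharper estimates: it sets $\tilde v_k(z)=v_k(z-T_ke_n)$, so the boundary becomes $\{z_n=0\}$, and performs the inversion about the origin, which now lies \emph{on} the boundary. The inversion then maps the half-space and its boundary to themselves, $\tilde v_k^{\lambda}$ satisfies the transformed boundary condition (\ref{tvl}), and the monotonicity of $s^{-n/(n-2)}h(s)$ in $GH_1$ gives on $O_{\lambda}\cap\{z_n=0\}$ the linear inequality $\partial_n w_{\lambda,k}\le \xi_{2,k}w_{\lambda,k}$ for $w_{\lambda,k}=\tilde v_k-\tilde v_k^{\lambda}$, valid for \emph{every} $\lambda\in[\lambda_0,\lambda_1]$ with no borderline case; the interior inequality comes from the $g$-part of $GH_1$ exactly as in Lemma \ref{har1}. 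The price is that the limit $\tilde V(z)=(1+\tfrac{A}{n(n-2)}|z-Te_n|^2)^{-(n-2)/2}$ is no longer radial about the inversion center, but one only needs some $\lambda_0<\lambda_1$ with $\tilde V>\tilde V^{\lambda_0}$ and $\tilde V<\tilde V^{\lambda_1}$ outside the respective spheres, and the final contradiction comes from $\tilde V<\tilde V^{\lambda_1}$. If you want to salvage your write-up, the fix is this translation and boundary-centered inversion, not a refinement of the boundary expansion.
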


\noindent{\bf Proof of Lemma \ref{har2}:}

Just like the interior case suppose there exist $r_k\ge R_k$ such that
\begin{equation}\label{har5}
\min_{\partial B_{r_k}\cap \tilde \Omega_k} v_k> 2(\frac{n(n-2)}A)^{\frac{n-2}2}r_k^{2-n}.
\end{equation}
Let
$$\tilde v_k(z)=v_k(z-T_ke_n), \quad \tilde v_k^{\lambda}(z)=(\frac{\lambda}{|z|})^{n-2}\tilde v_k(\frac{\lambda^2z}{|z|^2}) $$
and
$$D_k:=\{z; \quad \tilde M_k^{-\frac 2{n-2}}(z-T_ke_n)\in \Omega_k\cap B_{r_k} \} $$
be the domain of $\tilde v_k$. Then $D_k\subset \mathbb R^n_+$. Set
$$\Sigma_{\lambda}:=\{z\in D_k;\quad |z|>\lambda \}. $$
Let $\tilde V$ be the limit of $\tilde v_k$ in $C^2_{loc}(\mathbb R^n_+)$:
 $$\tilde V(z)=(1+\frac{A}{n(n-2)}|z-Te_n|^2)^{-\frac{n-2}2},$$
then there exists $\lambda_0<\lambda_1$ depending only on $n,A,T$ such that
$$\tilde V>\tilde V^{\lambda_0}\quad \mbox{ in }\mathbb R^n_+\setminus B_{\lambda_0} $$
and
$$\tilde V<\tilde V^{\lambda_1}\quad \mbox{ in }\mathbb R^n_+\setminus B_{\lambda_1}. $$

We shall employ the method of moving spheres to compare
$\tilde v_k$ and $\tilde v_k^{\lambda}$ on $\Sigma_{\lambda}$ for $\lambda\in [\lambda_0,\lambda_1]$.

First we use the uniform convergence of $\tilde v_k$ to $\tilde V$ to assert that, for any fixed $R>1$
\begin{equation}\label{case2e1}
\tilde v_k(y)>\tilde v_k^{\lambda_0}(y),\quad y\in \Sigma_{\lambda_0}\cap B_R .
\end{equation}

For $R$ large we have (let $a_1=(\frac{n(n-2)}A)^{\frac{n-2}2}$)
$$\tilde v_k(y)\ge (a_1-\epsilon/5)|y|^{2-n} \quad \mbox{ on } \partial B_R\cap \mathbb R^n_+ $$
and
$$\tilde v_k^{\lambda_0}(y)\le (a_1-2\epsilon/5)|y|^{2-n},\quad |y|>\lambda_0. $$
To prove $\tilde v_k>\tilde v_k^{\lambda_0}$ in $\Sigma_{\lambda_0}\setminus B_R$ we compare $\tilde v_k$ with
$$w= (a_1-3\epsilon/10) |y- A_1 e_n |^{2-n} $$
where $A_1=1/(n-2) c_h a_1^{\frac{2}{n-2}}$.
 For $R$ chosen sufficiently large we have
 $$w\ge \tilde v_k^{\lambda_0} \mbox{ in } \Sigma_{\lambda_0}\setminus B_R $$
 and
 $$\tilde v_k>w \quad \mbox{ on }\partial B_R\cap \Sigma_{\lambda_0}. $$

 To compare $\tilde v_k$ and $w$ over $\Sigma_{\lambda_0}\setminus B_R$, it is easy to see that $\tilde v_k>w$ on $\partial B_R\cap \Sigma_{\lambda_0}$ and $\partial \Sigma_{\lambda_0}\setminus (B_R\cup \{z_n>0\})$. Since $\tilde v_k-w$ is super-harmonic, the only thing we need to prove is on $\partial \mathbb R^n_+\setminus B_{\lambda_0}$
\begin{equation}\label{case2bry}
\partial_n (\tilde v_k-w)< \xi_k (\tilde v_k-w), \quad \mbox{ on } z_n=0 .
\end{equation}
for some positive function $\xi_k$.  Then standard maximum principle can be used to conclude that $\tilde v_k>w_k$ on $\Sigma_{\lambda_0}\setminus B_R$.

To obtain (\ref{case2bry}) first for $\tilde v_k$ we use $GH_2$ to have
$$\partial_n \tilde v_k\le c_h \tilde v_k^{\frac{n}{n-2}},\quad z_n=0. $$
On the other hand by the choice of $A_1$ we verify easily that
$$\partial_n w> c_h w^{\frac{n}{n-2}},\quad z_n=0. $$
Thus (\ref{case2bry}) holds from mean value theorem. We have proved that the moving sphere process can start at $\lambda=\lambda_0$:
$$\tilde v_k>\tilde v_k^{\lambda_0} \quad \mbox{ in }\quad \Sigma_{\lambda_0}. $$
Let $\bar \lambda$ be the critical moving sphere position:
$$\bar \lambda:=\min\{\lambda\in [\lambda_0,\lambda_1];\quad \tilde v_k^{\mu}>\tilde v_k^{\mu} \mbox{ in }\Sigma_{\mu}, \forall \mu>\lambda. \}. $$
As in {\bf Case one} we shall prove that $\bar \lambda=\lambda_1$, thus getting a contradiction from $\tilde V<\tilde V^{\lambda_1}$ for $|z|>\lambda_1$. For this purpose we let
$$w_{\lambda,k}=\tilde v_k-\tilde v_k^{\lambda}.  $$
To derive the equation for $w_{\lambda,k}$ we first recall from (\ref{c2vk}) and the definition of $\tilde v_k$ that
\begin{equation}\label{tvk}
\left\{\begin{array}{ll}
\Delta \tilde v_k(z)+l_k^{-\frac{n+2}{n-2}}g(l_k \tilde v_k)=0,\quad z\in \tilde \Omega_k, \\
\\
\frac{\partial \tilde v_k}{\partial z_n}=l_k^{-\frac n{n-2}}h(l_k \tilde v_k),\quad \{z_n=0\}\cap \partial \tilde \Omega_k.
\end{array}
\right.
\end{equation}
where $l_k=\sigma_k^{-\frac{n-2}2}\tilde M_k$. Correspondingly $\tilde v_k^{\lambda}$ satisfies
\begin{equation}\label{tvl}
\left\{\begin{array}{ll}
\Delta \tilde v_k^{\lambda}+(\frac{\lambda}{|z|})^{n+2}l_k^{-\frac{n+2}{n-2}}g(l_k(\frac{|z|}{\lambda})^{n-2}\tilde v_k^{\lambda}(z))=0,
\mbox{ in } \tilde \Sigma_{\lambda},\\
\\
\frac{\partial \tilde v_k^{\lambda}}{\partial z_n}=(\frac{\lambda}{|z|})^n l_k^{-\frac{n}{n-2}}h(l_k(\frac{|z|}{\lambda})^{n-2}\tilde v_k^{\lambda}(z))
\quad \mbox{ on } \partial \Sigma_{\lambda}\cap \{z_n=0\}.
\end{array}
\right.
\end{equation}
Let $O_{\lambda}$ be defined as before. Then in $O_{\lambda}$ we have, by $GH_1$,
\begin{align*}
(\frac{\lambda}{|z|})^{n+2}l_k^{-\frac{n+2}{n-2}}g(l_k(\frac{|z|}{\lambda})^{n-2}\tilde v_k^{\lambda}(z))\le
(v_kl_k)^{-\frac{n+2}{n-2}}g(l_k v_k)(v_k^{\lambda})^{\frac{n+2}{n-2}}, \mbox{ in } O_{\lambda} \\
(\frac{\lambda}{|z|})^n l_k^{-\frac{n}{n-2}}h(l_k(\frac{|z|}{\lambda})^{n-2}\tilde v_k^{\lambda}(z))\ge
(l_k v_k)^{-\frac n{n-2}} h(l_k v_k )(v_k^{\lambda})^{\frac n{n-2}},\mbox{ on } \partial O_{\lambda}\cap \{z_n=0\}.
\end{align*}
The inequalities above yield
\begin{align*}
\Delta w_{\lambda,k}+\xi_{1,k} w_{\lambda,k}\le 0,\quad \mbox{ in } O_{\lambda}\\
\partial_n w_{\lambda,k}\le \xi_{2,k} w_{\lambda,k},\quad \mbox{ on } \partial O_{\lambda}\cap \{z_n =0\}.
\end{align*}
where $\xi_{1,k}>0$ and $\xi_{2,k}$ are continuous functions obtained from mean value theorem. It is easy to see that the moving sphere argument can be employed to prove that $\bar \lambda=\lambda_1$, which leads to a contradiction from the limiting function $\tilde V$.
Thus
Lemma \ref{har2} is established. $\Box$

\medskip

Lemma \ref{har2} guarantees that on each radius $R_k\le r\le \frac 12 \tilde M_k$ the minimum of $v_k$ is always comparable to
$|z|^{2-n}$. Re-scaling $v_k$ as $r^{\frac{n-2}2}v_k(rz)$ we see the spherical Harnack inequality holds by the $GH_2$ and $GH_3$. Thus
Proposition \ref{prop3.2} is established in {\bf Case Two} as well.

\end{proof}

\begin{lem} \label{lem3.2}
Let $\{u_k\}$ be a sequence of solutions of (\ref{equ23}) and $q_k \rightarrow q \in \overline{B_1^+}$ be a sequence of points in $\Sigma_k$. Then there exist $C>0$, $r_2>0$ independent of $k$ such that
$$
u_k(q_k)u_k(x) \geq C{|x-q_k|}^{2-n} \mbox{ in } |x-q_k|\le r_2, \quad x\in B_3^+.
$$
\end{lem}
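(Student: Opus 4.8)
The plan is to establish the lower bound by a combination of the Harnack inequality from Proposition \ref{prop3.2}, a Green's representation of $u_k$ near $q_k$, and the by-now-standard fact that an isolated simple blowup point "sees" a positive amount of mass. The conclusion $u_k(q_k)u_k(x)\ge C|x-q_k|^{2-n}$ is the lower companion to the upper bound $u_k(q_k)u_k(x)\le C|x-q_k|^{2-n}$ furnished by Proposition \ref{prop3.2}, so morally $u_k(q_k)u_k$ behaves exactly like a multiple of the fundamental solution of the Laplacian reflected across the boundary.

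First I would set $\sigma_k=\mathrm{dist}(q_k,\Sigma_k\setminus\{q_k\})$ and recall the rescaling $\tilde u_k(y)=\sigma_k^{(n-2)/2}u_k(q_k+\sigma_k y)$, which satisfies \eqref{tildeuk}, together with \eqref{aug15e10}, \eqref{aug15e11}, and the isolated-simple estimate \eqref{isosimple1}. Since it suffices to prove the bound for $\tilde u_k$ on a ball of fixed radius (say $|y|\le 1/2$) and then unwind the scaling, I may as well work with $\tilde u_k$. On $\Omega_k\cap B_{3/4}$ (reflected suitably across $\partial\Omega_k\cap\{y_n=-\sigma_k^{-1}q_{kn}\}$ when the boundary portion is present) write the Green's representation
\begin{equation*}
\tilde u_k(y)=\int G_k(y,z)\,\sigma_k^{(n+2)/2}g(\sigma_k^{-(n-2)/2}\tilde u_k(z))\,dz+(\text{boundary term})+(\text{harmonic remainder}),
\end{equation*}
where $G_k$ is the Green's function with Neumann-type data coming from \eqref{tildeuk}; the harmonic remainder is nonnegative by the maximum principle (using $u_k>0$ and the sign of $h$ near the relevant range via $GH_1$/$GH_2$). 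The key point is to extract from the bubble concentrated at $0$ — whose existence is exactly what the selection process and Proposition \ref{prop3.2} guarantee — a definite lower bound
\begin{equation*}
\int_{|z|\le \delta}\sigma_k^{(n+2)/2}g(\sigma_k^{-(n-2)/2}\tilde u_k(z))\,dz\ge c_0\,\tilde u_k(0)^{-1},
\end{equation*}
which follows because after the further rescaling by $\tilde M_k=\tilde u_k(0)$ the right-hand side converges to $\int_{\mathbb R^n}AV^{(n+2)/(n-2)}$ (or the half-space analogue with the boundary integral of $c_hV_1^{n/(n-2)}$), a strictly positive finite number by the Caffarelli--Gidas--Spruck / Li--Zhu classification. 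Combining this with $G_k(y,z)\ge c|y-z|^{2-n}\ge c'|y|^{2-n}$ for $|z|\le\delta$ and $|y|$ bounded away from $0$, and then handling $|y|\le\delta$ directly from \eqref{isosimple1} and the spherical Harnack inequality proved inside Proposition \ref{prop3.2}, gives $\tilde u_k(0)\tilde u_k(y)|y|^{n-2}\ge C$ on $|y|\le 1/2$; rescaling back yields the Lemma with $r_2=\sigma_k/2$ — but since $\sigma_k$ may shrink, I would instead note that the Harnack chain from Proposition \ref{prop3.2} already propagates the bound out to a \emph{fixed} radius $r_2$ independent of $k$, because \eqref{isosimple1} holds on $B_{2/3}\cap\Omega_k$ in the $\tilde u_k$ variables, i.e.\ on a ball of radius $\sim\sigma_k$ in the original variable, and then one more application of the boundary Harnack inequality on the annulus $\{r_2/2\le|x-q_k|\le r_2\}$ (where $u_k$ is controlled above by \eqref{aug15e10}) transfers the lower bound on one sphere to the whole region.

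The main obstacle I anticipate is controlling the lower-order range of $g$ and $h$ — i.e.\ the region where $\sigma_k^{-(n-2)/2}\tilde u_k$ is small — which is precisely why the hypotheses $GH_2$ (rather than merely $GH_3$) are in force: there $g(s)s^{-1}$ and $|h(s)|s^{-1}$ are bounded, so the rescaled coefficients $a_k,b_k$ in the linearized equations $\Delta v_k+a_kv_k=0$, $\partial_n v_k+b_kv_k=0$ stay bounded and the boundary Harnack inequality of Han--Li / Lemma 6.2 of \cite{zhang1} applies uniformly. A second, more delicate point is that the Green's function $G_k$ lives on a $k$-dependent domain whose flat boundary portion recedes to infinity or stabilizes depending on whether $T_k\to\infty$ or $T_k\to T$; in both cases one needs the lower bound $G_k(y,z)\ge c|y-z|^{2-n}$ with $c$ independent of $k$, which is standard for the half-ball with oblique/Neumann data but must be stated carefully. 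Once these two uniformity issues are settled the argument is routine, and the positivity of the limiting bubble energy does the rest.
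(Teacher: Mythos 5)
There is a genuine gap, and it sits exactly at the point the Lemma is really about: the radius $r_2$ must be \emph{independent of} $k$, while your entire construction lives at the scale $\sigma_k=\mathrm{dist}(q_k,\Sigma_k\setminus\{q_k\})$, which may shrink to $0$. The Green's representation plus the bubble-mass lower bound gives, at best, $u_k(q_k)u_k(x)\ge C|x-q_k|^{2-n}$ for $|x-q_k|\lesssim \sigma_k$. Your proposed bridge to a fixed radius does not work as stated: \eqref{aug15e10} and \eqref{isosimple1} only control $u_k$ for $|x-q_k|\lesssim\sigma_k$ (they are statements about $\tilde u_k$ on $|y|\le 3/4$, i.e.\ on a ball of radius $\sim\sigma_k$), so the fixed annulus $\{r_2/2\le|x-q_k|\le r_2\}$ is not a region where you have any upper bound or bounded coefficients; indeed other points of $\Sigma_k$ (starting with the nearest one, at distance exactly $\sigma_k$) may lie anywhere between the scales $\sigma_k$ and $r_2$, so the linearized coefficient $a_k\sim u_k^{4/(n-2)}$ is not uniformly bounded after rescaling and the boundary Harnack inequality cannot be applied there with uniform constants. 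Even ignoring other blowup points, a single Harnack application cannot connect the sphere $|x-q_k|\sim\sigma_k$ to a fixed annulus; you would need a chain through $\log(r_2/\sigma_k)\to\infty$ dyadic scales, and the accumulated constants destroy both the uniformity of $C$ and the precise exponent $2-n$. A secondary problem: when $c_h>0$ (the case of Theorem \ref{mainthm1}), the boundary term in the Neumann-type Green's representation enters with an \emph{unfavorable} sign ($\partial_\nu u=-h(u)$ on the flat boundary with $h>0$ in the blowup range), so your assertion that the harmonic remainder is nonnegative ``by the sign of $h$'' is unjustified precisely in the situation the lemma is needed for.

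The paper avoids all of this by not working at the $\sigma_k$ scale and not using Proposition \ref{prop3.2} at all. It rescales by $M_k=u_k(q_k)$, uses the local convergence of $v_k$ to the Caffarelli--Gidas--Spruck or Li--Zhu bubble only to get a lower bound on a fixed sphere $|y|=R$, and then runs a maximum-principle comparison on the \emph{whole} region $R\le|y|\lesssim M_k^{2/(n-2)}$ (which is a fixed neighborhood of $q_k$ in the original variables) against an explicit barrier: $\phi=(\tfrac{n(n-2)}A-\epsilon)^{\frac{n-2}2}(|y|^{2-n}-M_k^{-2})$ when the boundary is far, and the shifted kernel $(\tfrac{n(n-2)}A-2\epsilon)^{\frac{n-2}2}|y-A_1e_n|^{2-n}$ when it is not, the shift $A_1$ being chosen so that $\partial_n w> c_h w^{\frac n{n-2}}$ on the flat boundary, which is exactly how the unfavorable sign of $h$ is absorbed. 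Since $g\ge 0$ makes $v_k$ superharmonic pointwise, this comparison is insensitive to the presence of other blowup points and yields the fixed $r_2$ directly. If you want to salvage your argument, you should replace the Harnack-chain transfer by such a global barrier (or carry out the Green's representation on a fixed half-ball with a uniform lower bound on the Neumann Green's function and an explicit treatment of the negative boundary term), rather than trying to propagate the $\sigma_k$-scale estimate outward.
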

\begin{proof}

Without loss of generality we assume that $q_k$s are local maximum points of $u_k$. We consider two cases

\noindent{\bf Case one: $u_k(q_k)^{\frac 2{n-2}}q_{kn}\to \infty$. }

Let $M_k=u_k(q_k)$ and
\begin{equation}\label{vkd1}
v_k(y)={M_k}^{-1}u_k({M_k}^{-\frac{2}{n-2}}y+q_k), \quad y\in \Omega_k:=\{y; \,\, M_k^{-\frac{2}{n-2}}y+q_k\in B_3^+\}.
\end{equation}
Then in this case $v_k$ converges uniformly to
$$V(y)=(1+\frac{A}{n(n-2)}|y|)^{-\frac{n-2}2} $$
over all compact subsets of $\mathbb R^n$.
For $\epsilon>0$ small we let
$$\phi=(\frac{n(n-2)}A-\epsilon)^{\frac{n-2}2} (|y|^{2-n}-M_k^{-2}) \quad |y|\le M_k^{\frac{2}{n-2}} $$
on $|y|\ge R$ where $R>1$ is chosen so that $v_k>\phi$ on $\partial B_R$. By direct computation we have
$$\frac{\partial \phi}{\partial y_n}> N_k \phi^{\frac{n}{n-2}}, \quad \mbox{ on } y_n=-q_{kn}M_k $$
for some $N_k\to \infty$. It is easy to see that $v_k\ge \phi$ on $\partial \Omega_k\setminus \{y_n=-M_k^{\frac 2{n-2}}q_{kn}\}$. On
$\{y_n=-M_k^{\frac 2{n-2}}q_{kn}\}$ we have
$$\partial_{y_n}(v_k-\phi)\le c_h(v_k-\phi). $$
 Thus standard maximum principle implies $v_k\ge \phi$ on $\Omega_k$. Lemma \ref{lem3.2} is established in this case.

\medskip

Now we consider

\noindent{\bf Case two: $M_k^{\frac{2}{n-2}}q_{kn}\le C. $}

Let $v_k$ be defined as in (\ref{vkd1}).
In this case the boundary condition is written as
$$\partial_{y_n}v_k=(M_k^{-\frac{2}{n-2}}v_k)^{-\frac{n}{n-2}}h(M_k^{-\frac 2{n-2}}v_k)v_k^{\frac n{n-2}}, \quad
y_n=-M_k^{\frac 2{n-2}}q_{kn}. $$
$v_k$ converges to $V_1$ over all compact subsets of $\mathbb R^n\{y_n\ge -T\}$ where
$$T=\lim_{k\to \infty} M_k^{\frac{2}{n-2}}q_{kn}. $$
 $V_1$ satisfies (\ref{global2}).

For $R$ large and $\epsilon>0$ small, both independent of $k$, we have
$$v_k(y)\ge (\frac{n(n-2)}A-\epsilon)^{\frac{n-2}2}|y|^{2-n},\quad |y|=R. $$
In $B_R\cap \mathbb R^n_+$ we have the uniform convergence of $v_k$ to $V_1$. Our goal is to prove that $v_k$ is bounded below by $O(1)|y|^{2-n}$ outside $B_R$. To this end let
$$w(y)=(\frac{n(n-2)}A-2\epsilon)^{\frac{n-2}2}|y-A_1e_n|^{2-n} $$
where
$$A_1=c_h(\frac{n(n-2)}A)-T. $$
Then it is easy to check that
$$\frac{\partial w}{\partial y_n}>c_h w(y)^{\frac{n}{n-2}},\quad \mbox{ on } y_n=-M_k^{\frac{2}{n-2}}q_{kn}. $$
By choosing $R$ larger if needed we have
$$v_k(y)>(\frac{n(n-2)}A-\epsilon)^{\frac{n-2}2}|y|^{2-n}>w(y),\quad |y|=R,\quad y\in \mathbb R^n_+. $$
Then it is easy to apply maximum principle to prove $v_k>w$ in $\Omega_k\setminus B_R$.
Lemma \ref{lem3.2} is established.
\end{proof}

Let $q_1^k\in \Sigma_k$ and $q_2^k$ be its nearest or almost nearest sequence in $\Sigma_k$:
$$|q_2^k-q_1^k|=(1+o(1))d(q_1^k, \Sigma_k\setminus \{q_1^k\}). $$
 We claim that
\begin{lem}\label{compa-able}
There exists $C>0$ independent of $k$ such that
$$
\frac{1}{C}u_k(q_1^k)\leq u_k(q_2^k)\leq Cu_k(q_1^k).
$$
\end{lem}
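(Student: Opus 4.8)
The plan is to pin $u_k$ down on a sphere lying between $q_1^k$ and $q_2^k$ from both sides --- from above by the isolated--simple estimate of Proposition~\ref{prop3.2} centred at one of the two points, and from below by the lower bound of Lemma~\ref{lem3.2} centred at the other --- and then cancel the common power of $\sigma_k:=|q_1^k-q_2^k|$. First I would make the harmless reductions: pass to a subsequence, replace $q_1^k,q_2^k$ by nearby local maxima (which changes $u_k(q_i^k)$ only by a bounded factor), and record that, by hypothesis, $\sigma_k=(1+o(1))\,\mathrm{dist}(q_1^k,\Sigma_k\setminus\{q_1^k\})$, so that Proposition~\ref{prop3.2} at $q_1^k$ gives $u_k(q_1^k)u_k(x)|x-q_1^k|^{n-2}\le C$ for $|x-q_1^k|\le\tfrac23\,\mathrm{dist}(q_1^k,\Sigma_k\setminus\{q_1^k\})$, while Lemma~\ref{lem3.2} at $q_1^k$ and at $q_2^k$ gives $u_k(q_i^k)u_k(x)\ge c\,|x-q_i^k|^{2-n}$ for $|x-q_i^k|\le r_2$. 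One may also assume $\sigma_k\to 0$: if $\sigma_k$ stays bounded below then $B^+_{\sigma_k/2}(q_1^k)$ contains only the blow-up point $q_1^k$, the segment $[q_1^k,q_2^k]$ stays a definite distance from $\Sigma_k$ (otherwise a point of $\Sigma_k$ would sit strictly closer to $q_1^k$ than $q_2^k$), and the comparison follows from a Harnack chain along that segment together with Lemma~\ref{lem3.2} near each endpoint.

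For $u_k(q_1^k)\le C u_k(q_2^k)$, take $x_k$ to be the midpoint of $[q_1^k,q_2^k]$, so $|x_k-q_1^k|=|x_k-q_2^k|=\tfrac12\sigma_k$. Since eventually $\tfrac12\sigma_k<\tfrac23\,\mathrm{dist}(q_1^k,\Sigma_k\setminus\{q_1^k\})$, Proposition~\ref{prop3.2} at $q_1^k$ gives $u_k(x_k)\le C u_k(q_1^k)^{-1}\sigma_k^{2-n}$, and since $\tfrac12\sigma_k\le r_2$ eventually, Lemma~\ref{lem3.2} at $q_2^k$ gives $u_k(x_k)\ge c\,u_k(q_2^k)^{-1}\sigma_k^{2-n}$; dividing cancels $\sigma_k^{2-n}$ and yields $u_k(q_1^k)\le C u_k(q_2^k)$. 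The reverse inequality $u_k(q_2^k)\le C u_k(q_1^k)$ is the mirror statement: if $\mathrm{dist}(q_2^k,\Sigma_k\setminus\{q_2^k\})\ge\tfrac12\sigma_k$, choose $x_k$ on $[q_1^k,q_2^k]$ with $|x_k-q_2^k|$ a small fixed fraction of $\sigma_k$ (so that $|x_k-q_2^k|\le\tfrac23\,\mathrm{dist}(q_2^k,\Sigma_k\setminus\{q_2^k\})$ and $|x_k-q_1^k|\le r_2$) and run the same argument with the roles of $q_1^k,q_2^k$ exchanged, now using Proposition~\ref{prop3.2} at $q_2^k$ and Lemma~\ref{lem3.2} at $q_1^k$.

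The delicate point --- and the step I expect to be the main obstacle --- is exactly the condition $\mathrm{dist}(q_2^k,\Sigma_k\setminus\{q_2^k\})\ge\tfrac12\sigma_k$ used for the reverse inequality: the isolated--simple estimate of Proposition~\ref{prop3.2} centred at $q_2^k$ is only valid out to distance $\sim\mathrm{dist}(q_2^k,\Sigma_k\setminus\{q_2^k\})$ from $q_2^k$, and this could be $o(\sigma_k)$. When that occurs there is a further point $q_3^k\in\Sigma_k$ with $|q_3^k-q_2^k|=o(\sigma_k)$, and because every element of $\Sigma_k\setminus\{q_1^k\}$ lies at distance $\ge(1-o(1))\sigma_k$ from $q_1^k$ we get $|q_3^k-q_1^k|=(1+o(1))\sigma_k$; thus $q_3^k$ is again an almost-nearest neighbour of $q_1^k$, lying in a strictly finer sub-cluster around $q_2^k$. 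My plan here is an induction over the scales present in the blow-up cluster of $q_1^k$: for fixed $k$ the set $\Sigma_k$ is finite and the balls $B^+_{r_i^k}(q_i^k)$ are disjoint, so this hierarchy of scales terminates; starting from the finest pair of mutually nearest points and applying the two-sided ``matched-scale'' comparison of the two preceding paragraphs at each successively coarser scale propagates the bound $u_k(p)\sim u_k(p')$ up the hierarchy and finally gives $u_k(q_2^k)\sim u_k(q_3^k)\sim u_k(q_1^k)$. The point requiring care is the uniformity of the comparison constant along the hierarchy, which is where one must use that, after rescaling by $\sigma_k$, the tails of the distinct bubbles superpose in a controlled way (as in Proposition~\ref{prop3.1} and the proof of Proposition~\ref{prop3.2}); alternatively, once the later Pohozaev-type analysis --- showing that distinct bubbling regions are a definite distance apart --- is in place, the case $\mathrm{dist}(q_2^k,\Sigma_k\setminus\{q_2^k\})=o(\sigma_k)$ simply does not occur and the first two paragraphs suffice.
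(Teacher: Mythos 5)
Your first inequality, $u_k(q_1^k)\le C u_k(q_2^k)$, is essentially the paper's argument (upper bound of order $u_k(q_1^k)^{-1}\sigma_k^{2-n}$ on an intermediate sphere, against the lower bound of Lemma \ref{lem3.2} centred at $q_2^k$), and it is fine. The genuine gap is in the reverse inequality, and it is exactly the point you flag yourself: your route needs the isolated--simple estimate of Proposition \ref{prop3.2} centred at $q_2^k$ out to distance $\sim\sigma_k$, which is unavailable when $\mathrm{dist}(q_2^k,\Sigma_k\setminus\{q_2^k\})=o(\sigma_k)$, and neither of your two escape routes closes it. The appeal to the later Pohozaev analysis is circular: the proof of (\ref{har-pos}) inside Proposition \ref{sing-apart} explicitly invokes Lemma \ref{compa-able} (together with Lemma \ref{lem3.2}) to conclude $b>0$ near $e$ and hence $b(0)>0$, so that proposition cannot be assumed here. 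The induction over the cluster hierarchy is only sketched and faces two concrete obstructions: the depth of the hierarchy is not bounded uniformly in $k$, so a comparison constant that deteriorates at each level yields no uniform $C$; and even granting $u_k(q_2^k)\sim u_k(q_3^k)$ inside the finer sub-cluster, at the top level you still need an upper bound for $u_k(q_2^k)$ at scale $\sigma_k$, which Proposition \ref{prop3.2} centred at $q_2^k$ cannot give --- its range is only a fraction of $\sigma_k':=\mathrm{dist}(q_2^k,\Sigma_k\setminus\{q_2^k\})$, and using it at that smaller radius against Lemma \ref{lem3.2} at $q_1^k$ produces the useless factor $(\sigma_k/\sigma_k')^{n-2}$.

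The paper avoids any isolation hypothesis on $q_2^k$: for the hard direction it applies the min--max Harnack inequality (Theorem A, i.e. the moving-sphere argument, valid for every positive solution with no assumption on the blow-up set) to $u_k$ rescaled to scale $\sigma_k$ around $q_2^k$; the rescaled nonlinearities $\sigma_k^{\frac{n+2}2}g(\sigma_k^{-\frac{n-2}2}\cdot)$, $\sigma_k^{\frac n2}h(\sigma_k^{-\frac{n-2}2}\cdot)$ satisfy $GH_0$--$GH_2$ uniformly, so one gets $\bigl(\max_{B(q_2^k,\sigma_k/4)\cap B_3^+}u_k\bigr)\bigl(\min_{B(q_2^k,\sigma_k/2)\cap B_3^+}u_k\bigr)\le C\sigma_k^{2-n}$. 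Since the max dominates $u_k(q_2^k)$ while Lemma \ref{lem3.2} centred at $q_1^k$ bounds the min from below by $c\,u_k(q_1^k)^{-1}\sigma_k^{2-n}$, the inequality $u_k(q_2^k)\le Cu_k(q_1^k)$ follows whether or not $q_2^k$ has a much closer neighbour in $\Sigma_k$ (the paper even remarks that Proposition \ref{prop3.2} is not needed for this lemma). Replacing your use of Proposition \ref{prop3.2} at $q_2^k$ by this Harnack inequality repairs the argument; as written, the proposal is incomplete on the clustered case.
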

\begin{proof}
Let $\sigma_k=d(q_1^k,\Sigma_k\setminus \{q_1^k\})$ and
$$\tilde u_k(y)=\sigma_k^{\frac{n-2}2}u_k(q_1^k+\sigma_ky). $$
We use $e_k$ to denote the image of $q_2^k$ after scaling (so $|e_k|\to 1$).
Then in $B_1$, $\tilde u_k(x)\sim \tilde u_k(0)^{-1}|x|^{2-n}$ for $|x|\sim 1/2$. On one hand, for $|x|=\frac 12$ we have, by Lemma \ref{lem3.2} applied to $e_k$,
$$\tilde u_k(0)^{-1}( \frac 12 )^{2-n}\ge C \tilde u_k(e_k)^{-1} $$
which is just $u_k(q_1^k)\le C u_k(q_2^k)$. On the other hand, the same moving sphere argument can be applied to $u_k$ near $q_2^k$ with no difference. The Harnack type inequality gives
$$\max_{B(q_2^k,1/4)\cap B_3^+}u_k\min_{B(q_2^k,1/2)\cap B_3^+}u_k\le C. $$
Using
$$\max_{B(q_2^k,1/4)\cap B_3^+}u_k\ge u_k(q_2^k)$$
 and
 $$\min_{B(q_2^k,1/2)\cap B_3^+}u_k\ge \min_{B(q_1^k,\sigma_k)\cap B_3^+}u_k$$
we have
\begin{equation}\label{aug27e1}
\tilde u_k(e_k)\tilde u_k(0)^{-1}\le C.
\end{equation}
Thus
(\ref{aug27e1}) gives
$u_k(q_2^k)\le C u_k(q_1^k)$. Lemma \ref{compa-able} is established.
\end{proof}

\begin{rem} Proposition \ref{prop3.2} is not needed in the proof of Lemma \ref{compa-able}.
\end{rem}

The following lemma is concerned with Pohozaev identity that can be verified by direct computation.

\begin{lem}\label{poho} Let $u$ solve
$$\left\{\begin{array}{ll}
\Delta u+ g(u)=0, \quad \mbox{ in } B_{\sigma}^+, \\
\\
\partial_n u=h(u)\quad \mbox{ on } \partial' B_{\sigma}^+.
\end{array}
\right.
$$
Then
\begin{align}\label{pigen}
& \int_{\partial' B_{\sigma}^+}h(u) (\sum_{i=1}^{n-1} x_i \partial_i u+\frac{n-2}2 u)+\int_{B_{\sigma}^+}(nG(u)-\frac{n-2}2 g(u)u)\\
=& \int_{\partial'' B_{\sigma}^+} \bigg (\sigma (G(u)-\frac 12 |\nabla u|^2+(\partial_{\nu} u)^2 )+\frac{n-2}2 u\partial_{\nu}u\bigg ) \nonumber
\end{align}
where $G(s)=\int_0^s g(t)dt$.
\end{lem}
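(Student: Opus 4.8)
**The plan is to derive the identity by the standard Pohozaev multiplier technique, keeping careful track of the boundary terms coming from the Neumann-type condition on $\partial' B_\sigma^+$.** The starting point is the equation $\Delta u + g(u) = 0$ in $B_\sigma^+$. I would multiply it by the Pohozaev multiplier $x\cdot\nabla u + \frac{n-2}{2}u$ and integrate over $B_\sigma^+$. The term $\int_{B_\sigma^+} g(u)\,(x\cdot\nabla u + \frac{n-2}{2}u)$ is handled by writing $g(u)\,x\cdot\nabla u = x\cdot\nabla(G(u))$ and integrating by parts; this produces $-n\int_{B_\sigma^+} G(u) + \int_{\partial B_\sigma^+} (x\cdot\nu)\,G(u)$, and combined with the $\frac{n-2}{2}\int g(u)u$ term yields the bulk integrand $\bigl(nG(u) - \frac{n-2}{2}g(u)u\bigr)$ appearing in (\ref{pigen}), plus a boundary contribution.

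**Next I would treat the Laplacian term $\int_{B_\sigma^+}\Delta u\,(x\cdot\nabla u + \frac{n-2}{2}u)$ by integration by parts.** The classical computation (see e.g. the Rellich–Pohozaev identity) gives, after one integration by parts, $-\int_{B_\sigma^+}\nabla u\cdot\nabla(x\cdot\nabla u + \frac{n-2}{2}u) + \int_{\partial B_\sigma^+}\partial_\nu u\,(x\cdot\nabla u + \frac{n-2}{2}u)$. The interior piece simplifies, using $\nabla u\cdot\nabla(x\cdot\nabla u) = |\nabla u|^2 + \frac{1}{2}x\cdot\nabla(|\nabla u|^2)$ and one more integration by parts, to produce $\frac{n-2}{2}\int|\nabla u|^2 - \frac{n-2}{2}\int|\nabla u|^2 + (\text{boundary terms}) = (\text{boundary terms})$; the net effect is that all interior Laplacian contributions cancel and one is left with boundary integrals over $\partial B_\sigma^+ = \partial'' B_\sigma^+ \cup \partial' B_\sigma^+$. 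On the spherical part $\partial'' B_\sigma^+$, where $\nu = x/|x|$ and $x\cdot\nu = \sigma$, the algebra collapses to the right-hand side of (\ref{pigen}): the term $\sigma\bigl(G(u) - \frac12|\nabla u|^2 + (\partial_\nu u)^2\bigr) + \frac{n-2}{2}u\,\partial_\nu u$.

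**The crucial point — and the only place the boundary condition enters — is the flat part $\partial' B_\sigma^+ \subset \{x_n = 0\}$.** There the outward normal is $\nu = -e_n$, so $\partial_\nu u = -\partial_n u = -h(u)$, and, because $x_n = 0$ on this face, $x\cdot\nabla u = \sum_{i=1}^{n-1} x_i\,\partial_i u$. Collecting the boundary contributions from both the $g(u)$-term (which gives $\int_{\partial' B_\sigma^+}(x\cdot\nu)G(u) = 0$ since $x\cdot\nu = -x_n = 0$ there) and the Laplacian term, the $\partial' B_\sigma^+$ integral reduces to $-\int_{\partial' B_\sigma^+} h(u)\bigl(\sum_{i=1}^{n-1} x_i\,\partial_i u + \frac{n-2}{2}u\bigr)$, and moving it to the left-hand side yields the first term of (\ref{pigen}). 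One must also check that the $|\nabla u|^2$ and $(\partial_\nu u)^2$ contributions cancel on $\{x_n=0\}$: since $\nabla u$ restricted to this face decomposes into the tangential part $(\partial_1 u,\dots,\partial_{n-1}u)$ and the normal part $\partial_n u\,e_n$, the combination $-\frac12|\nabla u|^2 + (\partial_\nu u)^2 = -\frac12|\nabla_{\mathrm{tan}} u|^2 + \frac12(\partial_n u)^2$ — and a careful bookkeeping of coefficients from the Rellich identity shows this and the $u\,\partial_\nu u$ piece combine exactly into $-h(u)(\sum x_i\partial_i u + \frac{n-2}{2}u)$ with no residual terms.

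**The main obstacle is purely bookkeeping:** making sure every boundary term from the two integrations by parts of the Laplacian is correctly attributed to $\partial'$ versus $\partial''$, and that on $\partial'$ the quadratic-gradient terms assemble precisely (with the right sign and factor) into the expression involving $h(u)$ rather than leaving a spurious $|\nabla_{\mathrm{tan}}u|^2$ term. This is a routine but error-prone computation; there is no analytic difficulty, only careful algebra, which is why the lemma is stated as "verified by direct computation."
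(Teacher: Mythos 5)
Your proposal is the standard Pohozaev multiplier computation, which is exactly the "direct computation" the paper invokes (it gives no written proof of this lemma), and the approach is correct and does yield (\ref{pigen}): the interior gradient terms cancel, the spherical part $\partial'' B_\sigma^+$ produces the right-hand side, and the flat part produces the $h(u)$ term. One correction to your flat-face bookkeeping: the terms $G(u)$ and $-\frac 12|\nabla u|^2$ enter the boundary integrand only with the weight $x\cdot\nu$, which vanishes identically on $\{x_n=0\}$, so there is no cancellation between $-\frac 12|\nabla_{\mathrm{tan}}u|^2$ and $(\partial_n u)^2$ to verify there; the flat-face contribution is simply $\partial_\nu u\,(x\cdot\nabla u)+\frac{n-2}{2}u\,\partial_\nu u=-h(u)\bigl(\sum_{i=1}^{n-1}x_i\partial_i u+\frac{n-2}{2}u\bigr)$, and the $(\partial_\nu u)^2$ term appears only on $\partial'' B_\sigma^+$, where $x\cdot\nabla u=\sigma\,\partial_\nu u$.
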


\begin{prop} \label{sing-apart} There exists $d>0$ independent of $k$ such that
$$\lim_{k\to \infty}|q_1^k-q_2^k|\ge d. $$
\end{prop}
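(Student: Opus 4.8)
\emph{Proof proposal.} I would argue by contradiction: suppose that along a subsequence $\sigma_k:=|q_1^k-q_2^k|\to 0$. Rescaling as before, set $\tilde u_k(y)=\sigma_k^{(n-2)/2}u_k(q_1^k+\sigma_k y)$, which sends $q_1^k$ to the origin and $q_2^k$ to a point $e_k$ with $|e_k|\to 1$, and leaves no other point of the rescaled blow-up set inside $B_{1/2}$. Then $\tilde u_k$ solves (\ref{tildeuk}), $\tilde u_k(0)\to\infty$, and Lemma~\ref{lem3.2} together with Proposition~\ref{prop3.2} gives $C^{-1}|y|^{2-n}\le \tilde u_k(0)\,\tilde u_k(y)\le C|y|^{2-n}$ on $B^+_{1/2}\setminus\{0\}$. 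Writing $w_k:=\tilde u_k(0)\,\tilde u_k$ and $q_*:=\lim_k e_k$ (so $|q_*|=1$), the plan is to show that $w_k$ converges to a positive function which is harmonic away from the origin, has there a point singularity $a|y|^{2-n}$ with $0<a<\infty$, and whose regular part at the origin is \emph{strictly positive} because of the second blow-up at $q_*$; this will contradict the Pohozaev identity of Lemma~\ref{poho}.

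\emph{Step 1: the limit of $w_k$.} From (\ref{tildeuk}), $w_k$ satisfies $\Delta w_k+c_kw_k=0$ in $B^+_{1/2}$ and $\partial_n w_k=d_kw_k$ on the flat part, with $|c_k|+|d_k|\to 0$ uniformly on compact subsets of $\overline{B^+_{1/2}}\setminus\{0\}$; this uses $GH_1$ where $u_k\ge 1$ and $GH_2$ where $0<u_k\le 1$, together with $\tilde u_k\to 0$ and $w_k\le C|y|^{2-n}$ off the origin. Since $w_k$ is squeezed between constant multiples of $|y|^{2-n}$, interior and boundary elliptic estimates give $w_k\to w$ in $C^{1,\alpha}_{loc}(\overline{B^+_{1/2}}\setminus\{0\})$ with $w>0$ harmonic and $\partial_n w=0$ on the flat part. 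Reflecting across the flat boundary when it persists in the limit (i.e.\ when $\lim\sigma_k^{-1}q^k_{1,n}<\infty$), $w$ extends to a positive harmonic function on a punctured ball, hence $w(y)=a|y|^{2-n}+b(y)$ near the origin with $a>0$ and $b$ harmonic. Finally, Lemma~\ref{lem3.2} near $q_2^k$ together with Lemma~\ref{compa-able} yields $w_k(y)\ge C^{-1}|y-e_k|^{2-n}$ near $e_k$, so $w$ has a singularity of the same order at $q_*$; since $|q_*|=1$, the corresponding Green-function contribution to $b$ at the origin is a fixed positive amount, so $b(0)>0$.

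\emph{Step 2: Pohozaev.} Next I would apply Lemma~\ref{poho} (its interior form when the rescaled blow-up point stays a positive distance from $\{y_n=0\}$, its half-space form otherwise) to $\tilde u_k$ on $B^+_\delta$ centered at the relevant point near $q_1^k$, with $\delta<1/2$ fixed; here $g,h,G,H$ are replaced by $\tilde g_k(s)=\sigma_k^{(n+2)/2}g(\sigma_k^{-(n-2)/2}s)$, $\tilde h_k(s)=\sigma_k^{n/2}h(\sigma_k^{-(n-2)/2}s)$, $\tilde G_k(s)=\sigma_k^{n}G(\sigma_k^{-(n-2)/2}s)$, $\tilde H_k(s)=\sigma_k^{n-1}H(\sigma_k^{-(n-2)/2}s)$. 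Rewriting $\sum_i x_i\tilde h_k(\tilde u_k)\partial_i\tilde u_k=\mathrm{div}_{x'}(x'\tilde H_k(\tilde u_k))-(n-1)\tilde H_k(\tilde u_k)$ and substituting $t=\sigma_k^{-(n-2)/2}s$, the bulk integrand becomes $\sigma_k^n\big(nG(t)-\tfrac{n-2}2g(t)t\big)$ and the reorganized flat-boundary integrand $\sigma_k^{n-1}\big(\tfrac{n-2}2h(t)t-(n-1)H(t)\big)$; the monotonicity in $GH_1$ makes both nonnegative for $t$ large, while $GH_2$ shows that the $0<t\le 1$ remainders, the codimension-two ``equator'' term, and the $\tilde G_k$-term on $\partial''B^+_\delta$ are all $o(\tilde u_k(0)^{-2})$. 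Consequently, multiplying the identity by $\tilde u_k(0)^2>0$ and letting $k\to\infty$, the surviving sphere integral is $\ge o(1)$ and, by the $C^1$-convergence $w_k\to w$, converges to $\int_{\partial''B^+_\delta}\big(\delta(-\tfrac12|\nabla w|^2+(\partial_\nu w)^2)+\tfrac{n-2}2\,w\,\partial_\nu w\big)$. Plugging in $w=a|y|^{2-n}+b$ and sending $\delta\to 0$, the classical computation (the $\delta^{2-n}$ terms cancel and $\int_{\partial B_\delta}\partial_\nu b=0$) gives the value $-c_n\,a\,b(0)$ with $c_n>0$. Hence $0\le -c_n\,a\,b(0)$, contradicting $a>0$ and $b(0)>0$. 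Therefore $\sigma_k$ cannot tend to $0$, proving Proposition~\ref{sing-apart}.

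\emph{Expected main obstacle.} In the scale-invariant model $g=As^{(n+2)/(n-2)}$, $h=c_hs^{n/(n-2)}$ treated by Li--Zhang \cite{lei}, the bulk and flat-boundary Pohozaev integrands vanish identically and $w$ is harmonic by scaling alone, so the genuine work is in the general case: one must verify that $GH_1$ places the bulk and flat-boundary integrands on the favorable side of the identity and that $GH_2$ (together with the isolated--simple bound of Proposition~\ref{prop3.2}) makes all remainder terms truly of lower order, $o(\tilde u_k(0)^{-2})$, than the boundary sphere term; identifying the limit $w$ in Step~1 also requires the Caffarelli--Gidas--Spruck and Li--Zhu classification theorems rather than pure scaling, and one must keep track of whether $q_1^k$ blows up in the interior or on $\partial\mathbb R^n_+$. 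A conceptually clean but essential ingredient is the strict positivity $b(0)>0$: it encodes the ``repulsion'' between the two nearby bubbles, is extracted from Lemmas~\ref{lem3.2} and \ref{compa-able}, and is exactly what turns the Pohozaev identity into a contradiction.
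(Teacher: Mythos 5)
Your proposal follows essentially the same route as the paper: a contradiction argument in which $\tilde u_k(0)\,\tilde u_k\to a|y|^{2-n}+b(y)$ with $a>0$ and, crucially, $b(0)>0$ (extracted from Lemma \ref{lem3.2} applied at $q_2^k$ together with Lemma \ref{compa-able}), followed by the Pohozaev identity of Lemma \ref{poho} multiplied by $\tilde u_k(0)^2$, where the monotonicity in $GH_1$ makes the bulk and flat-boundary terms nonnegative, $GH_2$ makes the remaining terms $o(\tilde u_k(0)^{-2})$, and the sphere term tends to a negative multiple of $a\,b(0)$. The differences are only cosmetic (your case split on $\sigma_k^{-1}q^k_{1,n}$ versus the paper's on $\tilde M_k^{2/(n-2)}q^k_{1,n}$, and your use of $GH_2$ for small arguments where the paper's pointwise inequalities $G_k(s)\ge\frac{n-2}{2n}s\,g_k(s)$ and $\sigma_k^{n-1}H(\sigma_k^{-\frac{n-2}{2}}s)\le\frac{n-2}{2n-2}s\,h_k(s)$ follow from $GH_1$ for all $s>0$), so this is essentially the paper's proof.
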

\begin{proof}

Recall that $\sigma_k=(1+o(1))|q_1^k-q_2^k|$. We prove by way of contradiction. Suppose $\sigma_k\to 0$,
et $\tilde M_k=\tilde u_k(0)$. We claim that
\begin{equation}\label{har-pos}
\tilde M_k \tilde u_k(y)\to a|y|^{2-n}+b(y) \mbox{ in } C^2_{loc}(B_{3/4}\cap \tilde \Omega_k\setminus \{0\}), \mbox{ with } a>0, \,\, b(0)>0
\end{equation}
where $\tilde \Omega_k=\{y;\quad \sigma_k y+q_1^k\in B_3^+\}$.

\medskip

\noindent{\bf Proof of (\ref{har-pos})}: As usual we consider the following two cases:

\noindent{\bf Case one: $\lim_{k\to \infty} q_{1n}^k \tilde M_k^{\frac 2{n-2}}\to \infty$}, and
{\bf Case two: $\lim_{k\to \infty} q_{1n}^k \tilde M_k^{\frac 2{n-2}}\to T<\infty$}

\medskip

First we consider {\bf Case one}. Let
$$T_k=\tilde M_k^{\frac 2{n-2}} q_{1n}^k. $$
Recall the equation for $\tilde u_k$ is (\ref{tildeuk}). Multiplying $\tilde M_k$ on both sides and letting $k\to \infty$ it is easy to see from the assumptions of $g$ and $h$ that
$\tilde M_k \tilde u_k\to h$ in $C^2_{loc}(B_1\setminus \{0\})$ where $h$ is a harmonic function defined in $B_1\setminus \{0\}$. Thus
 $$h(y)=a|y|^{2-n}+b(y)$$
 for some harmonic function $b(y)$ in $B_1$. From the pointwise estimate in Lemma \ref{lem3.2} we see that $a>0$.
Given any $\epsilon>0$, we compare $\tilde u_k$ and
$$w_k:=(a-\epsilon)(|y|^{2-n}-R_k^{2-n})$$ on $|y|\le R_k$. Here $R_k\to \infty$ is less than
$T_k$. Observe that
$\tilde u_k >w_k$ on $\partial B_{R_k}$ and $|y|=\epsilon_1$ for $\epsilon_1$ sufficiently small. Thus $\tilde u_k>w_k$ by the maximum principle. Let $k\to \infty$ we have, in $B_1$
$$a|y|^{2-n}+b(y)\ge (a-\epsilon)|y|^{2-n},\quad B_1\setminus B_{\epsilon_1}. $$
Then let $\epsilon\to 0$, which implies $\epsilon_1\to 0$ we have $b(y)\ge 0$ in $B_1$.
Next we claim that $b(0)>0$ because by Lemma \ref{lem3.2} and Lemma \ref{compa-able} we have
$$a|y|^{2-n}+b(y)\ge a_1|y-e|^{2-n} \quad \mbox{ in } B_1 $$
for some $a_1>0$, where $e=\lim_{k\to \infty} e_k$. Thus $b(y)>0$ when $y$ is close to $e$, which leads to $b(0)>0$.
(\ref{har-pos}) is established in {\bf Case one}.

\medskip

\noindent{\bf Case two}.

Again we first have $\tilde M_k \tilde u_k\to h$ in $C^2_{loc}(B_1^{-T}\setminus \{0\})$ and $h$ is of the form
$$h(y)=a |y|^{2-n}+b(y),\quad y_n\ge -T. $$
To prove $b(y)\ge 0$ we compare, for fixed $\epsilon>0$,
$\tilde M_k \tilde u_k$ with
$$w_k(y)=(a-\epsilon)(|y-b_k e_n |^{2-n}-(R_k-1)^{2-n}) $$
where $b_k\to 0$ and $R_k\to \infty$ are chosen to satisfy
$$(n-2)b_kR_k^{-2}>c_0 \sigma_k, \quad c_0=\sup_{0<s\le 1}s|h(s)| $$
and
$$(n-2)b_k> c_h \tilde M_k^{-\frac 2{n-2}}a^{\frac 2{n-2}}. $$
It is easy to see that such $b_k$ and $R_k$ can be found easily.
Let $h_k=\tilde M_k \tilde u_k$, and $\partial'\Omega_k=\partial \Omega_k\cap \{y_n=-T_k\}$. We divide $\partial'\Omega_k$ into two parts:
$$E_1=\{z\in \partial' \Omega_k;\quad \tilde u_k(z)\sigma_k^{-\frac{n-2}2}\ge 1 \,\, \},\quad E_2=\partial'\Omega_k\setminus E_1. $$
Then by the assumptions on $h$
$$\partial_n \tilde h_k\le \left\{\begin{array}{ll}
c_0\sigma_k h_k, \quad x\in E_2, \\
c_h\tilde M_k^{-\frac 2{n-2}} h_k^{\frac n{n-2}},\quad x\in E_1,
\end{array}
\right.
$$
With the choice of $b_k$ and $R_k$ it is easy to verify that
$$\partial_n w_k\ge \max\{ c_0\sigma_k w_k,c_h\tilde M_k^{-\frac 2{n-2}}w_k^{\frac{n}{n-2}}\} \mbox{ on } \partial' \Omega_k\cap B_{R_k}.$$
Thus standard maximum principle can be applied to prove $h_k\ge w_k$ on $\Omega_k\cap B_{R_k}$.
Letting $k\to \infty$ first and $\epsilon\to 0$ next we have $b(y)\ge 0$ in $B_1\cap \{y_n\ge -T\}$. Then by Proposition \ref{compa-able}
we see that $b(y)>0$ when $y$ is close to $e$, the limit of $e_k$. The fact $\partial_n b=0$ at $0$ implies $b(0)>0$. (\ref{har-pos}) is proved in both cases.

\medskip

Finally to finish the proof of Proposition \ref{sing-apart} we derive a contradiction from each of the following two cases:

\noindent{\bf Case one: $\lim_{k\to \infty} \tilde M_k^{\frac 2{n-2}} q_{1n}^k>0$. }

In this case we use the following Pohozaev identity on $B_{\sigma}$ for $\sigma<\lim_{k\to \infty} \tilde M_k^{\frac 2{n-2}}q_{1n}^k$:
\begin{align}\label{pohoz1}
&\int_{B_{\sigma}}\bigg (n G_k(\tilde u_k)-\frac{n-2}2\tilde u_k g_k(\tilde u_k) \bigg )\\
=& \int_{\partial B_{\sigma}}\bigg (\sigma( G_k(\tilde u_k)-\frac 12|\nabla \tilde u_k|^2+|\partial_{\nu} \tilde u_k|^2)+\frac{n-2}2\tilde u_k
\partial_{\nu}\tilde u_k\bigg )\nonumber
\end{align}
where
$$g_k(s)=\sigma_k^{\frac{n+2}2}g(\sigma_k^{-\frac{n-2}2}s), \quad G(t)=\int_0^tg(s)ds,\quad G_k(s)=\sigma_k^{n}G(\sigma_k^{-\frac{n-2}2}s). $$
First we claim that for $s>0$,
\begin{equation}\label{pilhs}
G_k(s)\ge \frac{n-2}{2n}s g_k(s).
\end{equation}
Indeed, writing $g(t)=c(t)t^{\frac{n+2}{n-2}}$, we see from $GH_1$ that $c(t)$ is a non-increasing function, thus
\begin{align*}
&G_k(s)=\sigma_k^n G(\sigma_k^{-\frac{n-2}2}s)=\sigma_k^n\int_0^{\sigma_k^{-\frac{n-2}2}s}c(t)t^{\frac{n+2}{n-2}}dt\\
&\ge  \sigma_k^nc(\sigma_k^{-\frac{n-2}2}s)\int_0^{\sigma_k^{-\frac{n-2}2}s}t^{\frac{n+2}{n-2}}dt=\frac{n-2}{2n}
c(\sigma_k^{-\frac{n-2}2}s)s^{\frac{2n}{n-2}}=\frac{n-2}{2n}s g_k(s).
\end{align*}

Replacing $s$ by $\tilde u_k$ we see that the left hand side of (\ref{pohoz1}) is non-negative.
Next we prove that
\begin{equation}\label{pohoe2}
\lim_{k\to \infty} \tilde M_k^2 \int_{\partial B_{\sigma}}\bigg (\sigma( G_k(\tilde u_k)-\frac 12|\nabla \tilde u_k|^2+|\partial_{\nu} \tilde u_k|^2)+\frac{n-2}2\tilde u_k
\partial_{\nu}\tilde u_k\bigg )<0
\end{equation}
for $\sigma>0$ small.  Clearly after (\ref{pohoe2}) is established we obtain a contradiction to (\ref{pohoz1}).
To this end first we prove that
\begin{equation}\label{sept5e1}
\tilde M_k^2G_k(\tilde u_k)=o(1).
\end{equation}
Indeed, by $GH_1$ and $GH_3$
\begin{align*}
G_k(\tilde u_k)&=\sigma_k^n \int_0^{\sigma_k^{-\frac{n-2}2}\tilde u_k}g(t)dt \\
\le &\left\{\begin{array}{ll}
\sigma_k^n\int_0^{\sigma_k^{-\frac{n-2}2}\tilde u_k} ctdt,\quad \mbox{ if } \sigma_k^{-\frac{n-2}2}\tilde M_k^{-1}\le 1, \\
\\
\sigma_k^n(\int_0^1 ctdt+\int_1^{\sigma_k^{-\frac{n-2}2}\tilde u_k}ct^{\frac{n+2}{n-2}}dt),\quad \mbox{ if } \sigma_k^{-\frac{n-2}2}\tilde M_k^{-1}
>1.
\end{array}
\right.
\end{align*}
Therefore
$$G_k(\tilde u_k)\le
 \left\{\begin{array}{ll} C\sigma_k^2\tilde M_k^{-2},\quad \mbox{ if } \sigma_k^{-\frac{n-2}2}\tilde M_k^{-1}\le 1, \\
C\sigma_k^n+C \tilde M_k^{-\frac{2n}{n-2}},\quad \mbox{ if } \sigma_k^{-\frac{n-2}2}\tilde M_k^{-1}>1.
\end{array}
\right.
$$
Clearly (\ref{sept5e1}) holds in either case. Consequently we write the left hand side of (\ref{pohoe2}) as
$$
\int_{\partial B_{\sigma}}(-\frac 12\sigma |\nabla h|^2+\sigma |\partial_{\nu} h|^2+\frac{n-2}2h \partial_{\nu}h)+o(1)
$$
where
$$h(y)=a|y|^{2-n}+b(y),\quad b(0)>0, a>0. $$
By direct computation we have
\begin{align*}
&\int_{\partial B_{\sigma}}(-\frac 12\sigma |\nabla h|^2+\sigma |\partial_{\nu} h|^2+\frac{n-2}2h \partial_{\nu}h)\\
=& \int_{\partial B_{\sigma}}\bigg (-\frac{(n-2)^2}a\cdot b(0) \cdot \sigma^{1-n}+O(\sigma^{2-n})\bigg )dS.
\end{align*}
Thus (\ref{pohoe2}) is verified when $\sigma>0$ is small.

The second case we consider is {\bf Case two: $\lim_{k\to \infty} \tilde M_k^{\frac{2}{n-2}}q_{1n}^k=0$}.

In this case we use the following Pohozaev identity on $B_{\sigma}^+$: Let
$$h_k(s)=\sigma_k^{\frac n2}h(\sigma_k^{-\frac{n-2}2}s),$$ then we have
\begin{align}\label{pohoz2}
& \int_{\partial B_{\sigma}^+\cap \partial \mathbb R^n_+}h_k(\tilde u_k) (\sum_{i=1}^{n-1} x_i \partial_i \tilde u_k+\frac{n-2}2 \tilde u_k)+\int_{B_{\sigma}^+}(nG_k(\tilde u_k)-\frac{n-2}2 g_k(\tilde u_k)\tilde u_k)\\
=& \int_{\partial B_{\sigma}^+\cap \mathbb R^n_+} \bigg (\sigma (G_k(\tilde u_k)-\frac 12 |\nabla \tilde u_k|^2+(\partial_{\nu} \tilde u_k)^2 )+\frac{n-2}2 \tilde u_k\partial_{\nu}\tilde u_k\bigg ) \nonumber
\end{align}
Multiplying $\tilde M_k^2$ on both sides and letting $k\to \infty$ we see by
the same estimate as in {\bf Case one} that the second term on the left hand side is non-negative, the right hand side is strictly negative. The only term we need to consider is
$$\lim_{k\to \infty} \tilde M_k^2 \int_{\partial B_{\sigma}^+\cap \partial \mathbb R^n_+}h_k(\tilde u_k) (\sum_{i=1}^{n-1} x_i \partial_i \tilde u_k+\frac{n-2}2 \tilde u_k). $$
Let $H(s)=\int_0^sh(t)dt$, then from integration by parts we have
\begin{align}\label{sept5e3}
&\int_{\partial B_{\sigma}^+\cap \partial \mathbb R^n_+}h_k(\tilde u_k) (\sum_{i=1}^{n-1} x_i \partial_i \tilde u_k+\frac{n-2}2 \tilde u_k)\\
=&\int_{\partial B_{\sigma}\cap \partial \mathbb R^n_+}\sigma_k^{n-1}H(\sigma_k^{-\frac{n-2}2}\tilde u_k)\sigma \nonumber\\
& +\int_{\partial B_{\sigma}^+\cap \partial \mathbb R^n_+}(-(n-1)\sigma_k^{n-1}H(\sigma_k^{-\frac{n-2}2}\tilde u_k)+\frac{n-2}2\tilde u_k
h_k(\tilde u_k))dx'.
\nonumber
\end{align}
For the first term on the right hand side of (\ref{sept5e3}) we claim
\begin{equation}\label{sept5e5}
\tilde M_k^2 \sigma_k^{n-1}H(\sigma_k^{-\frac{n-2}2}\tilde u_k)=o(1) \quad \mbox{ on }\partial B_{\sigma}.
\end{equation}
Indeed, by $GH_1$ and $GH_2$
$$| H(\sigma_k^{-\frac{n-2}2}\tilde u_k)|
\le \left\{\begin{array}{ll}
\int_0^{\sigma_k^{-\frac{n-2}2}\tilde u_k}ctdt, \quad\mbox{ if } \sigma_k^{-\frac{n-2}2}\tilde u_k\le 1, \\
\\
\int_0^1 ct dt +\int_1^{\sigma_k^{-\frac{n-2}2}\tilde u_k} ct^{\frac n{n-2}}dt, \quad \mbox{ if }  \sigma_k^{-\frac{n-2}2}\tilde u_k>1,
\end{array}
\right.
$$
Using $\tilde u_k=O(1/\tilde M_k)$ on $\partial B_{\sigma}$ we then have
$$\tilde M_k^2 \sigma_k^{n-1} | H(\sigma_k^{-\frac{n-2}2}\tilde u_k)|
\le \left\{\begin{array}{ll}
O(\sigma_k),\quad \mbox{ if } \sigma_k^{-\frac{n-2}2}\tilde u_k\le 1, \\
O(\sigma_k)+O(\tilde M_k^{-\frac{2}{n-2}}),\quad \mbox{ if }  \sigma_k^{-\frac{n-2}2}\tilde u_k>1.
\end{array}
\right.
$$
Thus (\ref{sept5e5}) is verified and the first term on the right hand side of (\ref{sept5e3}) is $o(1)$.

Therefore we only need to estimate the last term of (\ref{sept5e3}), which we claim is non-negative. Indeed, for $t>0$, we write $h(t)=b(t)t^{\frac{n}{n-2}}$ for some
non-decreasing function $b$. Then we have
\begin{align*}
&\sigma_k^{n-1}H(\sigma_k^{-\frac{n-2}2}s)
= \sigma_k^{n-1}\int_0^{\sigma_k^{-\frac{n-2}2}s}h(t)dt\\
=& \sigma_k^{n-1}\int_0^{\sigma_k^{-\frac{n-2}2}s} b(t) t^{\frac {n}{n-2}} dt
\le  \sigma_k^{n-1} b(\sigma_k^{-\frac{n-2}2}s)\int_0^{\sigma_k^{-\frac{n-2}2}s}t^{\frac{n}{n-2}}dt\\
=& \frac{n-2}{2n-2}b(\sigma_k^{-\frac{n-2}2}s) s^{\frac{2n-2}{n-2}}=\frac{n-2}{2n-2} h_k(s) s.
\end{align*}
Replacing $s$ by $\tilde u_k$ in the above we see that the last term of (\ref{sept5e3}) is non-negative. Thus there is a contradiction in (\ref{pohoz2}) in {\bf Case two} as well.  Proposition \ref{sing-apart} is established.
\end{proof}

\medskip

We are in the position to finish the proof of Theorem \ref{mainthm1}. By Proposition \ref{sing-apart} there is a positive distance between any two members of $\Sigma_k$. The uniform bound of $\int_{B_1^+}u_k^{2n/n-2}$ follows readily from the pointwise estimates for blowup solutions near an isolated blowup point. The uniform bound for $\int_{B_1^+} |\nabla u_k|^2$ can be obtained by scaling and standard elliptic estimates for linear equations. Thus we have obtained a contradiction to (\ref{aug14e1}). Theorem \ref{mainthm1} is established. $\Box$

\section{Proof of Theorem \ref{minorthm}}

If $h$ is non-positive, the energy estimate follows from the Harnack inequality in a straight forward way. Indeed, let $G(x,y)$ be
a Green's function on $B_3^+$ such that $G(x,y)=0$ if $x\in B_3^+,y\in \partial B_3^+\cap \mathbb R^n_+$ and $\partial_{y_n} G(x,y)=0$ for $x\in B_3^+,y\in
\partial B_3^+\cap \partial \mathbb R^n_+$. It is easy to see that $G$ can be constructed by adding the standard Green's function on $B_3$ its reflection over $\partial \mathbb R^n_+$. It is also immediate to observe that
$$G(x,y)\ge C_n |x-y|^{2-n}, \quad x\in B^+_3,\quad y\in B_2^+. $$
Multiplying $G$ on both sides of (\ref{equ23}) and integrating by parts, we have
\begin{align*}
u(x)+\int_{\partial B_3^+\cap \partial \mathbb R^n_+} h(u(y))G(x,y)dS_y+\int_{\partial B_3^+\cap \mathbb R^n_+}u(y)\frac{\partial G(x,y)}{\partial \nu}dS_y \\
=\int_{B_3^+}g(u(y))G(x,y)dy.
\end{align*}
Here $\nu$ represents the outer normal vector of the domain. Using $h\le 0$ and $\partial_{\nu}G\le 0$, we have
$$u(x)\ge \int_{B_3^+}g(u(y))G(x,y)dy, \quad x\in B_3^+. $$
In particular take let $u(x_0)=\min_{\partial B_2^+}u$, then $|x_0|=2$, thus
$$C\ge \max_{B_1^+}u\cdot \min_{B_2^+}u\ge \int_{B_{3/2}^+}g(u(y))u(y)G(x_0,y)dy\ge C\int_{B_{3/2}^+}g(u)udy. $$
Therefore we have obtained the bound on $\int_{B_{3/2}^+}g(u)udy$. To obtain the bound on $\int_{B_1^+} |\nabla u|^2$, we use a cut-off function $\eta$ which is $1$ on $B_1^+$ and is $0$ on $B_2^+\setminus B_{3/2}^+$ and $|\nabla \eta |\le C$. Multiplying $u\eta^2$ to both sides of (\ref{equ23}) and using integration by parts and Cauchy inequality we obtain the desired bound on $\int_{B_1^+}|\nabla u|^2$.
Theorem \ref{minorthm} is established. $\Box$

\end{document}